\newcommand{\binary}[1]{\langle #1 \rangle_2}
\newcommand{\N}{\mathbb{N}}
\newcommand{\Z}{\mathbb{Z}}
\newcommand{\QQ}{\mathscr{Q}}
\newcommand{\X}{\mathbb{X}}
\newcommand{\lang}{\mathcal{B}}
\newcommand{\lab}{\Theta}
\newcommand{\alf}{\mathcal{A}}
\newcommand{\setsep}{\:;\:}
\renewcommand{\phi}{\varphi}
\newcommand{\modu}[1]{\:(\text{mod } #1)}
\theoremstyle{plain}\newtheorem{theorem}{Theorem}[section]
\theoremstyle{plain}\newtheorem{lemma}[theorem]{Lemma}
\theoremstyle{definition}
\theoremstyle{remark}\newtheorem{remark}[theorem]{Remark}
\theoremstyle{plain}\newtheorem{corollary}[theorem]{Corollary}
\title{Mixing properties in coded systems}
\author{Jeremias Epperlein}
\address[J. Epperlein]{Institute for Analysis and Center for Dynamics, Technische Universität Dresden, Zellescher Weg 12-14, 01069 Dresden, Germany}\email{jeremias.epperlein@tu-dresden.de}
\author{Dominik Kwietniak}
\address[D. Kwietniak]{Faculty of Mathematics and Computer Science, Jagiellonian University in Krakow, ul. \L o\-jasiewicza 6, 30-348 Krak\'ow, Poland}\email{dominik.kwietniak@uj.edu.pl}
\urladdr{www.im.uj.edu.pl/DominikKwietniak/}
\author{Piotr Oprocha}
\address[P. Oprocha]{AGH University of Science and Technology, Faculty of Applied
	Mathematics, al.
	Mickiewicza 30, 30-059 Krak\'ow, Poland}
\email{oprocha@agh.edu.pl}
\date{\today}
\begin{document}

\begin{abstract}
We show that topological mixing, weak mixing and total transitivity are equivalent for coded systems.
We provide an example of a mixing coded system which cannot be approximated by any
increasing sequence of mixing shifts of finite type, has only periodic points of even period and each set
of its generators consists of blocks of even length. We prove that such an example cannot be a synchronized system.
We also show that a mixing coded systems has the strong property
$P$. %We also characterize the hierarchy of mixing in coded systems.
\end{abstract}
\subjclass[2010]{37B10, 37B20}
\keywords{coded system, synchronized system, shift of finite type, sofic shift, mixing, property $P$, specification property, Fisher cover}
\maketitle

\section{Introduction}
We consider coded systems and their recurrence properties that are stronger than topological transitivity. We are interested
in topological (weak) mixing, and properties like the strong property $P$, which is a variant of the specification property.

Recall that a shift space is a \emph{coded system} if it can be presented by an irreducible directed graph whose edges are labelled by symbols from a finite alphabet $\alf$.
Here, ``presented'' means that the shift space is the closure in $\alf^\Z$ of all bi-infinite sequences of symbols which are labels for bi-infinite paths in the graph. Equivalently, $X$ is a coded system if it is a closure in $\alf^\Z$ of the set of all bi-infinite sequences obtained by freely concatenating the words in a (possibly infinite) list of words over $\alf$. Such a list is the set of \emph{generators} of $X$.

Transitive sofic shifts are coded systems which can be presented by a finite irreducible digraph. Coded systems were introduced by Blanchard and
Hansel \cite{BH}, who showed that any factor of a coded
system is coded. This is a generalization of a well-known property of sofic shifts \cite[Corollary 3.2.2]{LM}.

It is natural to ask which properties of irreducible sofic shifts extend
to coded systems. We address an aspect of this problem which leads to an extension of known results and a quite unexpected example.
The counter-intuitive nature of this example is in our opinion the most interesting feature of this paper, but we hope that our other results
 will fill a gap in the literature. Furthermore, there has been a resurgence in interest in coded systems in general,
and their notable subclasses in particular (for example $S$-gap shifts \cite{BG,CT12,DJ}, $\beta$-shifts \cite{CT12}, Dyck shifts \cite{M08, M13}). Coded systems often provide a testing ground for further extensions (see \cite{CT14},   %,CFT}
% [CT continuations]
where the line of investigation initiated in \cite{CT12} is developed and extended to non-symbolic systems). Therefore understanding the situation for coded systems may lead to a solution of more general problems.

In order to describe our results, note first that any coded system is topologically transitive (irreducible).
Recall also that for a non trivial transitive sofic shift $X$ the following stronger variants of transitivity
are equivalent:
\begin{enumerate}[(a)]
\item\label{list:spec} $X$ has the periodic specification property;
\item\label{list:mix} $X$ is topologically mixing;
\item\label{list:prop-p} $X$ has the strong property $P$;
\item\label{list:weak-mix} $X$ is topologically weakly mixing;
\item\label{list:tot-trans}$X$ is totally transitive;
\item\label{list:rel-prime} $X$ has two periodic points with relatively prime primary periods.
\end{enumerate}
It seems that this is a folklore theorem. It follows from various
well-known results, but we could not find it in this form in the
literature.

Here we examine the connections between transitivity variants \eqref{list:mix}--\eqref{list:rel-prime} for non necessarily sofic coded systems. Note that the proof of equivalence of properties \eqref{list:mix}--\eqref{list:rel-prime}  for shifts of finite type can be adapted for irreducible Markov shifts over countable alphabets. Furthermore, every coded system contains a dense subset which is a factor of an irreducible countable Markov shift. This suggests that properties \eqref{list:mix}--\eqref{list:rel-prime} should remain equivalent for coded systems. And this is indeed the case with one notable exception.

It is easy to see that property \eqref{list:rel-prime} implies topological mixing for coded systems. We prove, somewhat surprisingly, that the converse does not hold for all coded systems. We construct a mixing coded systems without a periodic point of odd period and hence without a generator of an odd length. Such a system cannot contain a mixing shift of finite type. Note that Krieger \cite{K} characterized coded systems as those shift spaces which contain an
increasing sequence of irreducible shifts of finite type with dense union. Krieger's characterization is the best possible, in the sense that there exists an increasing sequence of sofic shifts whose closure is a shift space which is not a coded system \cite{BH2}.
It follows from our result that there are mixing coded systems, which cannot be approximated from the inside by mixing shifts of finite type as they do not contain any mixing sofic shift.

On the other hand, \eqref{list:mix}--\eqref{list:tot-trans} are still equivalent for coded systems. We also show that if $X$ is a transitive but not totally transitive coded system, then for some prime $p$ we can write $X=X_1\cup \ldots X_p$, where $X_j$'s are closed subsets of $X$ cyclically permuted by the shift map $\sigma$, each $X_j$ is $\sigma^p$ invariant, $X_i\cap X_j$ is nowhere dense for $i\neq j$, and $(X_j,\sigma^p)$ is topologically mixing. Finally, we note that \eqref{list:spec} implies \eqref{list:mix} by the definition, but is not implied by any of the conditions \eqref{list:mix}--\eqref{list:rel-prime}, since all $\beta$-shifts are mixing but some do not have the specification property (see \cite{Tho12}).

This paper is organized as follows: In the next section we set up the notation and terminology. In Section 3 we prove a structure theorem for topologically mixing coded systems  and that total transitivity, weak mixing and mixing are equivalent for coded systems. Section 4 contains an example of a shift space which has the strong property $P$, but is not topologically mixing. In Section 5 we describe a mixing coded systems without a periodic point of odd period. In Section 6 we establish an equivalence of the strong property $P$ and topological mixing for coded systems. In section 7 we present some complementary results on synchronized systems. They imply that \eqref{list:mix}--\eqref{list:rel-prime} are equivalent for synchronized systems, thus our example does not have any synchronizing words.

\section{Notation and definitions}

We assume the reader is familiar with elementary symbolic dynamics as in \cite{LM}.
We fix a finite set $\alf$ with at least two elements and call it the \emph{alphabet}.
Let $\alf^\Z$ denote the set of bi-infinite (two-sided) sequences
\[
x = (x_i)_{i\in\Z} = ...x_{-3}x_{-2}x_{-1}x_0x_1x_2\ldots,
\]
such that $x_i \in\alf$ for all $i$. We equip $\alf$ with the discrete topology and we
consider $\alf^\Z$ as a compact metric space in the product topology. The shift operator on $\alf^\Z$ is denoted by $\sigma$.
A \emph{shift space} over the alphabet $\alf$ is a shift-invariant subset of $\alf^\Z$
which is closed in that topology. The set $\alf^\Z$ itself is a shift space
called the \emph{full shift}. In this paper all shift spaces will be two-sided and transitive.

A \emph{block} of length $k$ is an element $w = w_1w_2\ldots w_k$ of $\alf^k$. Throughout this paper ``\emph{a word}'' is a synonym for ``\emph{a block}''. The length of a block is denoted $|w|$. The set of all words over $\alf$ is denoted by $\alf^*$.
Given $x \in\alf^\Z$ and $i,j\in\Z$ with $i\le j$ we write $x_{[i,j]}$ to denote the block $x_ix_{i+1}\ldots x_j$. We
say that a block $w$ occurs in $x$ if $w = x_{[i,j]}$ for some $i,j\in\Z$. A language of a shift space $X$ is the set $\lang(X)$ of all blocks that occur in
$X$. The set of blocks of length $n$ in the language of $X$ is denoted $\lang_n(X)$. Similarly, the set of all words that occur in a point $x\in\alf^\Z$ is denoted $\lang(x)$.
The \emph{empty word} $\perp$ is the unique word of length $0$. We write $\alf^+$ for the set of nonempty words over $\alf$.

A \emph{central cylinder set} of a word $u\in\lang_{2r+1}(X)$, where $r\in\N$, is the set $[u]\subset X$ of points from $X$ in which the block $u$ occurs starting at position $-r$, that is, $\{y\in X: y_{[-r,r]}=u\}$. Central cylinders (or \emph{cylinders} for short) are open and closed subsets of $X$. The family
\[
\{[x_{[-r,r]}]: r\in\N\}
\]
of cylinder sets determined by a central subblock of $x$ is a neighbourhood basis for a point $x\in X$.
We use a multiplicative notation for \emph{concatenation} of words, so that $w^n=w\ldots w$ ($n$-times) and $w^\infty=www\ldots\in\alf^\N$.

Given a set of words $\mathscr{Q}\subset\alf^+$, we define $\mathscr{Q}^0=\{\perp\}$, and
$\mathscr{Q}^n=\mathscr{Q}^{n-1}\mathscr{Q}=\{uw: u\in\mathscr{Q}^{n-1},\,w\in \mathscr{Q}\}$.
We also let $\mathscr{Q}^+$ denote the set of all possible finite concatenations of words from $Q$, that is, $\mathscr{Q}^+=\bigcup_{n=1}^\infty\mathscr{Q}^{n}$. In particular, $\mathscr{Q}\subset \mathscr{Q}^+$.

By $\mathscr{Q}^\Z$ we denote the set containing all possible bi-infinite concatenations of elements of $\mathscr{Q}$, that is, $x\in \mathscr{Q}^\Z$ if $x$ can be partitioned into elements of $\mathscr{Q}$.

By a \emph{countable graph} we mean a directed graph with at most countably many vertices and edges.
A countable graph $G$ is \emph{labeled}  if there is a \emph{labeling} $\Theta$ which is simply a function
from the set of edges of $G$ to the alphabet $\alf$. A labeling of edges extends, in an obvious way, to a labeling of all finite (respectively, infinite, bi-infinite) paths on $G$ by blocks (respectively, infinite or bi-infinite sequences) over $\alf$.
The set $Y_G$ of bi-infinite sequences constructed by reading off labels along a bi-infinite path on a labeled graph $(G,\Theta)$ is
shift invariant, but usually it is not closed and therefore not a shift space. Nevertheless,
its closure $X=\overline{Y_G}$ in $\mathcal{A}^{\mathbb{Z}}$ is a shift space, and we say that $X$ is \emph{presented} by $(G,\Theta)$. Any shift space admitting such a presentation is a \emph{coded system}. A set of \emph{generators} for a shift space $X$ is a family of words $\QQ\subset \alf^*$ such that
the language of $X$ coincides with the set of all subblocks occurring in elements of $\QQ^+$. Equivalently, $X$ is the closure of $\QQ^\Z$ in $\alf^\Z$. Every coded systems has a set of generators, and conversely, if a transitive shift space has a set of generators then it is a coded system.
A countable graph is \emph{irreducible} if given any pair of its vertices, say $(v_i, v_j)$,
there is a path from $v_i$ to $v_j$.

%
%If $A$ is a set of words, then by $A^\infty$ we understand the set of all bi-infinite sequences $x$ which can be presented as infinite concatenations of words %in $A$.

Dynamical properties like those mentioned in \eqref{list:spec}--\eqref{list:tot-trans} above are usually defined for a continuous map acting on a  metric space. Here we define them in the language of symbolic dynamics.

A shift space $X$ is:
\begin{enumerate}
  \item \emph{transitive} if for any $u,v\in \lang(X)$ there is $w\in \lang(X)$ such that $uwv\in \lang(X)$;
  \item \emph{totally transitive} if for any $u,v\in \lang(X)$ and any $n>0$ there is $w\in \lang(X)$ such that $uwv\in \lang(X)$ and $n$ divides $|uw|$;
  \item \emph{weakly mixing} if for any $u_1,v_1,u_2,v_2\in \lang(X)$ there are $w_1,w_2\in \lang(X)$ such that $u_1w_1v_1,u_2w_2v_2\in \lang(X)$ and $|u_1w_1|=|u_2w_2|$;
  \item \emph{mixing} if for every $u,v\in \lang(X)$ there is $N>0$ such that for every $n>N$ there is $w\in \lang_n(X)$ such that $uwv\in \lang(X)$.
\end{enumerate}
We say that a shift space has:
\begin{enumerate}
  \item the \emph{strong property $P$} if for any $k\ge 2$
  %there is $n$ such that for any $k\geq 2$
 and any words $u_1,\ldots, u_k\in \lang(X)$ with $|u_1|=\ldots=|u_k|$ there is an $n\in\N$ such that for any $N\in\N$ and function
 $\varphi\colon\{1,\ldots,N\}\to\{1,\ldots,k\}$ there are
 words $w_1,\ldots, w_{N-1}\in \lang_n(X)$ such that $u_{\varphi(1)}w_1u_2\ldots u_{\varphi(N-1)}w_{N-1}u_{\varphi(N)}\in \lang(X)$;
  \item the \emph{specification property} if there is an integer $N\geq 0$ such that for any $u,v\in \lang(X)$ there is $w\in \lang_N(X)$ such that $uwv\in \lang(X)$.
\end{enumerate}
Blanchard \cite{B92} proved that the strong property $P$ implies weak mixing, and does not imply mixing.

It is convenient to rephrase  the above definitions using the sets
\[
N_\sigma([u],[v])=\{\ell\in\N: uwv\in\lang(X)\text{ for some }w\in\alf^* \text{ such that }|uw|=\ell\},
\]
where $u,v\in\lang(X)$. For example a shift space $X$ is weakly mixing if  for every $u,v\in\lang(X)$ the set $N_\sigma([u],[v])$ contains arbitrarily long intervals of consecutive integers (see \cite[Theorem 1.11]{G}). A shift space $X$ is transitive for $\sigma^k$ where $k\in\N$ if and only if
the set
\[
N_{\sigma^k}([u],[v])=\{\ell\in\N:  uwv\in\lang(X)\text{ for some }w\in\alf^* \text{ such that }|uw|=k\ell\},
\]
is non-empty for every $u,v\in\lang(X)$.

If a dynamical systems on a compact metric space is transitive, then there is a dense $G_\delta$-set of points with dense orbit. In particular, in a transitive shift space $X$ in every cylinder set there is a point $x$ such
that every block in $\lang(X)$ occurs infinitely many times in $x$.

A \emph{synchronizing word} for a shift space $X$ is an element $v$ of $\lang(X)$ such that $uv,vw\in\lang(X)$ for some blocks $u,w$ over $\alf$ imply $uvw\in\lang(X)$. A \emph{synchronized system} is a shift space with a synchronizing word. Synchronized systems were introduced in \cite{BH}. Every synchronized system is coded. The uniqueness of the minimal right-resolving presentation known for sofic shifts
extends to synchronized systems as outlined in \cite[p. 451]{LM} (see  also \cite[p. 1241]{Thomsen} and references therein). Synchronized systems and their generalizations were extensively studied in \cite{FF}.

Let $x_1, x_2, \ldots, x_n$ be positive integers. It is well-known that every sufficiently large integer can be represented as a non-negative integer linear combination of the $x_i$ if and only if $\gcd(x_1,x_2,\ldots,x_n)=1$. For a later reference we formulate an important consequence of this result as a remark.

\begin{remark}\label{rem:frob}
Let $x_1, x_2, x_3,\ldots$ be positive integers. If $\gcd(x_1,x_2,x_3,\ldots)=k$, then every sufficiently large multiple of $k$ can be represented as a non-negative integer linear combination of the $x_i$.
\end{remark}

\section{Total transitivity implies mixing}

We prove  that total transitivity, weak mixing and mixing are equivalent for coded systems.
This leads to a structure theorem for coded systems which are not totally transitive.

\begin{theorem}\label{thm:mpdcoded}
Suppose that $X$ is a coded system and let $D\subset X$ be a closed set with nonempty interior such that $\sigma^k (D)\subset D$ for some $k>0$. If the shift space $(D,\sigma^k)$ is totally transitive, then it is mixing.
\end{theorem}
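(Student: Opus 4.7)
The plan is to reduce $\sigma^k$-mixing of $D$ to cofiniteness of a single return-time set, and to deduce this from the coded structure of $X$ combined with total transitivity.

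First, using the nonempty-interior hypothesis, I would fix a word $u_0\in\lang(X)$ such that the central cylinder $[u_0]\cap X$ is contained in $D$. Any occurrence of $u_0$ in a point of $X$ can be shifted so that $u_0$ sits centrally, yielding a point of $D$; hence every word of $\lang(X)$ containing $u_0$ as a subword also lies in $\lang(D)$. Combined with transitivity of $(D,\sigma^k)$, which is part of total transitivity, this reduces mixing of $(D,\sigma^k)$ to showing that
\[
A:=\bigl\{\ell\ge1: u_0wu_0\in\lang(X),\ |u_0w|=k\ell \text{ for some } w\bigr\}
\]
is cofinite in $\N$. Indeed, for arbitrary $u,v\in\lang(D)$ one can pad $u$ on the right and $v$ on the left by words of length divisible by $k$ that route through $u_0$, all keeping us inside $\lang(D)$.

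Second, I would exploit the coded structure. Fix a set of generators $\QQ$ for $X$; since $u_0\in\lang(X)=\lang(\QQ^+)$, there is a concatenation $\alpha u_0\beta=q_1\cdots q_m$ with each $q_i\in\QQ$. The key observation is that for any $n\ge0$ and $q'_1,\ldots,q'_n\in\QQ$ the word $\alpha u_0\beta\,q'_1\cdots q'_n\,\alpha u_0\beta$ is in $\QQ^+\subseteq\lang(X)$, so its subword $u_0\beta\,q'_1\cdots q'_n\,\alpha u_0$ is in $\lang(X)$. Setting $M:=|\alpha u_0\beta|$ and writing $G^+$ for the numerical semigroup generated by $G:=\{|q|:q\in\QQ\}$, this yields $A\supseteq\{(M+L)/k: L\in G^+,\ k\mid M+L\}$. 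By Remark~\ref{rem:frob}, with $d:=\gcd(G)$, the semigroup $G^+$ eventually fills every multiple of $d$, placing an arithmetic progression of common difference $\mathrm{lcm}(d,k)/k$ inside $A$. Performing the construction with two different decompositions $(\alpha_1,\beta_1)$ and $(\alpha_2,\beta_2)$ of $u_0$ produces further such progressions, with offset residue $-|\alpha_1|+|\alpha_2|\pmod d$; moreover, iterating the construction shows that $A$ is closed under adding any of these base elements.

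Third, I would invoke total transitivity to ensure these progressions together exhaust $\N$. Writing $\Omega\subseteq\Z/d\Z$ for the set of residues $|\alpha|\bmod d$ arising from all decompositions of $u_0$ in $\QQ^+$, cofiniteness of $A$ reduces to the claim that the difference set $\Omega-\Omega$ covers every coset of $\gcd(d,k)\Z$ inside $\Z/d\Z$. Were this to fail, the available gap residues would be confined to some proper subgroup of $\Z/d\Z$; I would upgrade this combinatorial restriction into a dynamical obstruction, namely a pair $(u',v')\in\lang(D)$ and $n\ge1$ with $N_{\sigma^{kn}}([u'],[v'])=\emptyset$, contradicting total transitivity of $(D,\sigma^k)$. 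The main obstacle is precisely this upgrade: rigorously converting a shortage of decomposition-offsets for $u_0$ into an actual closed $\sigma$-invariant cyclic decomposition is delicate because the same word may appear at many different offsets in many different $\QQ^+$-concatenations, and the argument must survive all of these simultaneously. The natural strategy is a contrapositive construction: extract from an insufficient $\Omega$ a closed cyclic decomposition of a $\sigma$-invariant subset containing $[u_0]\cap X$, and show that the cyclic period forces $\sigma^{kn}$-intransitivity for some $n$, contradicting the hypothesis.
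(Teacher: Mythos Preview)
Your reduction to cofiniteness of a single return set $A=N_{\sigma^k}([u_0],[u_0])$ is sound in spirit and close to what the paper does via the criterion of \cite{HKO}. The real problem is the step you yourself flag as ``the main obstacle'': you never establish that the difference set $\Omega-\Omega$ of decomposition offsets fills all the required residues modulo $d=\gcd\{|q|:q\in\QQ\}$. This is not a technicality that can be patched by a routine contrapositive. The example constructed later in the paper (a mixing coded system with $d=2$, all generators of even length) shows that $d>1$ is entirely compatible with mixing; so the needed residues must come from the \emph{positions} at which $u_0$ can sit inside $\QQ^+$-words, and there is no direct combinatorial reason these positions hit every residue class. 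Turning a deficit in $\Omega$ into a genuine failure of total transitivity of $(D,\sigma^k)$ would require building a closed regular periodic decomposition of $D$ out of offset data, and you give no mechanism for doing that. As it stands, the proposal stops exactly at the nontrivial point.

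The paper sidesteps the whole offset-residue analysis. Working with a graph presentation rather than a generating set, it first shows that periodic points of $(D,\sigma^k)$ are dense (using total transitivity to control path lengths modulo $k$ and irreducibility of $G$ to close up loops), and then invokes the general fact that total transitivity together with dense periodic points implies \emph{weak} mixing. Weak mixing furnishes, for any chosen $m$, a block of $m$ consecutive integers in the return set $N_{\sigma^k}([v_a],[v_a])$; combining this single block with iterates of one fixed loop of length $m$ (chosen divisible by $k$) immediately gives cofiniteness. In other words, the paper replaces your delicate residue argument with one clean application of weak mixing, which does all the arithmetic for free. If you want to salvage your generator-based approach, the missing ingredient is precisely an independent proof that $(D,\sigma^k)$ is weakly mixing; once you have that, your semigroup argument goes through with a single decomposition and no mention of $\Omega$.
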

\begin{proof}
Let $G$  be an irreducible countable labeled graph presenting $X$. Since $D$ has nonempty interior, for each $r\in\mathbb{N}$ large enough there is $w\in \lang(X)$ of length $2r+1$ such that the cylinder $[w]=\{x\in X : x_{[-r,r]}=w\}$ is contained in the interior of $D$.

We claim that the periodic points of $(D,\sigma^k)$ are dense in $D$. Let $V$ be a nonempty open subset of $D$.
Then there is an open set $U\subset X$ with $V=U\cap D$. Without loss of generality we may assume that $U$ is a cylinder set of some block $u\in\lang(X)$. By total transitivity there is $s\in\lang(X)$ and a path on $G$ labeled by $wsu$ such that the length of $ws$ is $jk$ for some integer $j$. We can join the last vertex on the path labeled by $wsu$ with the first vertex of the same path by a path labeled by a word $t$. Therefore there is
a bi-infinite periodic sequence $y\in X$ such that $y_{[-r,\infty)}=(ws ut)^\infty$ for some words $s,t\in\lang(X)$. But then $y\in [w]\subset D$
and $\sigma^{jk}(y)\in [u]$ because $|ws|=jk$. On the other hand $y\in D$, hence $\sigma^{jk}(y)\in D$ and then $\sigma^{jk}(y)\in U\cap D=V$. This shows that periodic points are dense in $D$ under $\sigma^k$. Since every totally transitive dynamical system with dense set of periodic points is weakly mixing (e.g. see \cite{BanksWM}), it follows that $(D,\sigma^k)$ is weakly mixing.

We proceed to a proof of mixing of $(D,\sigma^k)$.
We recall that by \cite[Lemma 3.1]{HKO} a shift system $(D,\sigma^k)$ is mixing if for each cylinder $v$ in some neighborhood basis of a point with dense orbit in $D$ the set $N_{\sigma^k}([v],[v])$ is cofinite.
Let $\bar{x}\in[w]$ be a point whose orbit is dense in $D$ under $\sigma^k$.
Given $a\ge r$ set $v_a=\bar{x}_{[-a,a]}$ and
\[
[v_a]=\{x\in D : x_{[-a,a]}=v_a\}.
\]
Note that $\bar{x}\in[v_a]\subset [w]\subset D$, hence the cylinder of $v_a$ in $X$ and in $D$ coincide.
Furthermore,  $([v_a])_{a \geq r}$ is a neighborhood basis of $\bar{x}$.

It is now enough to show that the set $N_{\sigma^k}([v_a],[v_a])$ is cofinite.  Let $u$ be a word such that $v_au$ is a labeling of a loop in $G$ and let $m$ be the length of $v_au$. Without loss of generality we may assume that $k$ divides $m$ (we replace $vu$ by $(v_au)^k$ if necessary). Denote the loop presenting $v_au$ on $G$ by $\xi$. By weak mixing of $(D,\sigma^k)$ the set $N_{\sigma^k}([v_a],[v_a])$ contains a set of $m$ consecutive integers, hence there is an integer $q>0$ such that for each $i=1,\ldots, m$ the graph $G$ contains a path $\eta_i$ labeled $v_a v_i v_a$ where $|v_i|=(q+i)k-|v_a|$.  Since $G$ is irreducible, for each $i=0,1,\ldots, m$ there exists a path $\gamma_i$ in $G$ such that the following path is a loop on $G$:
\[
\pi=\xi \gamma_0 \eta_1 \gamma_1\eta_2\ldots \gamma_{m-1} \eta_m\gamma_m.
\]
Let $p=|\pi|$. We claim that for every $j\ge 1$ and $i=1,\ldots,m$ we have
\[
	p+(mj)+(q+i)k \in N_{\sigma}([v_a],[v_a]).
\]
In order to show this, consider the labeling of the following path:
\[
\eta_i \gamma_i\ldots \eta_m \gamma_m (\xi)^j \gamma_0 \eta_1 \gamma_1 \ldots \gamma_{i-1}\eta_i.
\]
It starts and ends with $v_a$ (it is a path on $G$ because $\pi$ and $\xi$ are loops). %FIGURE???
This proves that $N_{\sigma^k}([v_a],[v_a])$ is cofinite.
\end{proof}

\begin{corollary}\label{cor:equiv}
If a coded system $X$ is totally transitive, then it is mixing.
\end{corollary}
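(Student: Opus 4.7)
The plan is to apply Theorem~\ref{thm:mpdcoded} directly with the choices $D=X$ and $k=1$. First I would check the hypotheses: $D=X$ is trivially closed in $X$, it has nonempty interior in $X$ (being the whole space, any cylinder set from $\lang(X)$ is an open subset of $D$ with nonempty interior, and in particular $D$ itself is an open set), and $\sigma(X)\subset X$ by shift-invariance. The assumption that $X$ is totally transitive is exactly the total transitivity of $(D,\sigma^k)=(X,\sigma)$ required by the theorem.

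With these verifications in place, Theorem~\ref{thm:mpdcoded} immediately yields that $(X,\sigma)$ is mixing, which is the desired conclusion. There is no real obstacle here, since the corollary is just the specialization of the structural statement proved above to the case where the invariant closed set is all of $X$. The content of the corollary lies entirely in Theorem~\ref{thm:mpdcoded}; the deduction itself is a one-line observation.
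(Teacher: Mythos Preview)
Your argument is correct and is exactly the paper's own proof: set $D=X$ (and $k=1$) and invoke Theorem~\ref{thm:mpdcoded}. The only difference is that you spell out the hypothesis checks, which the paper leaves implicit.
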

\begin{proof}
Take $D=X$ and apply Theorem \ref{thm:mpdcoded}.
\end{proof}

We will now describe the structure of coded systems which are not totally transitive.
Banks proved in \cite{BanksWM} that if a dynamical system $(X,T)$ is transitive, but $(X,T^k)$ is not transitive for some $k>1$ then there is a \emph{regular periodic decomposition of $X$}, that is, one can find a finite cover $\{D_0,\dots, D_{k-1}\}$ of $X$ by non-empty regular closed sets with pairwise disjoint interiors such that $T(D_{i-1})\subseteq D_{i \modu{k}}$ for each $1\le i\le k$.  In this case we say that $k$ is the \emph{length} of the decomposition. Recall that a set is \emph{regular closed} if it is the closure of its interior. If $(D_i,T^n)$ is topologically mixing for some regular periodic decomposition $\mathscr{D}=\{D_0,\dots, D_{k-1}\}$ of $X$ then we say that $T$ is \emph{relatively mixing} with respect to $\mathscr{D}$. Observe that $(X,T)$ is relatively mixing with respect to $\mathscr{D}$ if and only if $(D_0,T^n)$ is topologically mixing. It was proved in \cite{BanksWM} that if there is an upper bound on the possible lengths of periodic decompositions of a transitive dynamical system then there exists a regular periodic decomposition $D_0,D_1,\ldots, D_{n-1}$ such that $(D_i, T^n)$ is totally transitive for every $0\le i <n$ (this decomposition is called \emph{terminal}).

\begin{theorem}\label{thm:rel-mix}
Every coded system is relatively mixing.
\end{theorem}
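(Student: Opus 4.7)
The plan is to combine the existence theorem for terminal regular periodic decompositions (proved in \cite{BanksWM} and recalled above) with Theorem \ref{thm:mpdcoded}. If one knows that the lengths of the regular periodic decompositions of $X$ admit a uniform upper bound, Banks' theorem yields a terminal decomposition $\{D_0, \ldots, D_{n-1}\}$ in which each $(D_i, \sigma^n)$ is totally transitive, and then Theorem \ref{thm:mpdcoded} promotes this to mixing on each $D_i$. My task thus splits into two steps: (i) produce the uniform bound on decomposition lengths, and (ii) verify that the hypotheses of Theorem \ref{thm:mpdcoded} are met on each piece of the terminal decomposition.

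For step (i), I would use the fact that every coded system contains a periodic point: a set of generators $\QQ$ of $X$ contains some nonempty word $w$, and the bi-infinite concatenation of copies of $w$ is a periodic point $x \in X$ of primary period $p$ dividing $|w|$. If $\{D_0, \ldots, D_{k-1}\}$ is any regular periodic decomposition of $X$, then $x \in D_i$ for some $i$, and combining $\sigma^p(x) = x$ with $\sigma(D_j) \subseteq D_{(j+1) \bmod k}$ forces $k \mid p$. In particular every regular periodic decomposition has length at most $p$, which is the uniform bound required to invoke Banks' terminal decomposition theorem.

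For step (ii), each $D_i$ in the terminal decomposition is regular closed and therefore has nonempty interior, and by construction $\sigma^n(D_i) \subseteq D_i$. Applying Theorem \ref{thm:mpdcoded} with $D = D_0$ and $k = n$ then yields that $(D_0, \sigma^n)$ is mixing, which is exactly the definition of $\sigma$ being relatively mixing with respect to $\{D_0, \ldots, D_{n-1}\}$. The case in which $X$ is already totally transitive corresponds to $n = 1$ and is also handled directly by Corollary \ref{cor:equiv}, so no separate treatment is needed. I expect step (i) to be the only non-routine part of the argument; once the uniform bound is in place, Banks' theorem and Theorem \ref{thm:mpdcoded} deliver the conclusion essentially by inspection.
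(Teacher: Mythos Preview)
Your overall plan---bound the lengths of regular periodic decompositions, invoke Banks' terminal-decomposition theorem, then apply Theorem~\ref{thm:mpdcoded} to a piece---is exactly the paper's strategy, and step~(ii) is fine. The gap is in step~(i). The implication ``$x\in D_i$ and $\sigma^p(x)=x$ together with $\sigma(D_j)\subseteq D_{(j+1)\bmod k}$ force $k\mid p$'' is not valid as stated, because in a regular periodic decomposition the $D_j$'s are only required to have pairwise disjoint \emph{interiors}; a periodic point may sit on the common boundary of several pieces, and then $x\in D_i\cap D_{(i+p)\bmod k}$ is no contradiction. A concrete coded counterexample: let $X$ be presented by the irreducible graph with vertices $a,b$ and edges $a\to b$ labelled $0$, $a\to b$ labelled $1$, $b\to a$ labelled $0$ (equivalently, $\QQ=\{00,10\}$). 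Then $0^\infty=(00)^\infty\in X$ is a fixed point, so with $w=00$ your $p$ equals $1$, yet $X$ admits a regular periodic decomposition $\{D_0,D_1\}$ of length $2$, where $D_j=\{x: x_i=0\text{ whenever }i\not\equiv j\pmod 2\}$, and $0^\infty\in D_0\cap D_1$. Thus neither $k\mid p$ nor even $k\le p$ follows from your argument.

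The paper obtains the bound by working with a cylinder in the interior of a piece rather than with a possibly boundary-dwelling periodic point. If $\eta$ is a cycle of length $k$ in a presenting graph and $\{D_0,\ldots,D_{n-1}\}$ is a decomposition with $n>k$, one chooses a cylinder $[w]\subset\operatorname{int}(D_0)$ (this exists because $D_0$ is regular closed, and then $[w]\cap D_j=\emptyset$ for all $j\neq 0$) and uses irreducibility of the graph together with the cycle $\eta$ to produce a point of $[w]$ whose $\sigma^{nj+k}$-image again lies in $[w]$; but $\sigma^{nj+k}(D_0)\subset D_k$, contradicting $[w]\cap D_k=\emptyset$. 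The essential difference from your attempt is that the open cylinder $[w]$, unlike your chosen periodic point, is guaranteed to miss the other $D_j$'s. Your proof can be repaired along these lines (indeed the correct uniform bound is $|w|$, the length of a generator, rather than the primary period of $(w)^\infty$), but as written step~(i) does not go through.
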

\begin{proof}Let $X$ be a coded system.
By Theorem~\ref{thm:mpdcoded} and the result of Banks mentioned above, it suffices to show that there is an upper bound on the possible lengths of regular periodic decompositions of $X$.

Let $G$  be an irreducible countable labeled graph presenting $X$ and let $k$ be the length of a cycle $\eta$ in $G$. We claim that the length of a periodic decomposition of $X$ can not be greater than $k$. On the contrary, assume that $D_0,\ldots, D_{n-1}$ is a regular periodic decomposition and $n>k$. Since $D_0$ is regularly closed, there is $r\in\N$ and a word $w$ of length $2r+1$ such that the cylinder $[w]=\{x : x_{[-r,r]}=w\}\subset D_0$. Since for each $i>0$ the interior of a regular closed set $D_i$ is disjoint  with the interior of $D_0$, we have $D_i \cap [w]=\emptyset$ for each $i>0$.

Since $G$ is irreducible, there are paths $\pi,\gamma$ with $|\pi|\geq |w|$ such that $\pi \eta \gamma$
is a cycle on $G$ labeled $wu\in \lang(X)$ for some word $u$. Repeating the path $\pi\eta \gamma$ if necessary we may assume that $n$ divides $|\pi\eta\gamma|=|wu|$. Let a word $wu'\in \lang(X)$ be the label of the path $\pi \eta \eta \gamma$ on $G$. We have $|wu'|=nj +k$ for some $j\geq 1$. Note that $wu'w\in \lang(X)$ because $\pi \eta \eta \gamma \pi$ is a path on $G$
and hence there is $x\in X$ with $x_{[-r,t]}=wu'w$ for some $t>r$. But then $x$ and  $\sigma^{nj+k}(x)$ both belong to $[w]\subset D_0$. On the other hand
\[\sigma^{nj+k}(x)\in \sigma^{nj+k}(D_0)=\sigma^k(D_0)=D_k.\]
Since $k<n$, we have
$[w]\cap D_k =\emptyset$ which leads to a contradiction.
\end{proof}

\section{Property $P$ does not imply mixing}

We construct a weakly mixing but not mixing spacing shift $Y$ with the strong property $P$. This shows that the property $P$ and topological mixing are not equivalent in general. Note that $Y$ can not be coded system by Corollary \ref{cor:equiv}. A similar example was first given by Blanchard \cite{B92}, but our construction is much simpler.

Given $R\subset\mathbb{N}$ we define a \emph{spacing shift} $\Omega_R$ as the set of all $x\in\{0,1\}^\mathbb{Z}$ such that the condition $x_i=x_j=1$ for some $i,j\in\mathbb{Z}$ with $i\neq j$ implies $|i-j|\in R$. Elements of $\lang(\Omega_R)$ are called \emph{$R$-allowed blocks} (see \cite{Banks-etal,LZ} for more details).

\begin{theorem}
There is a non-mixing shift space $Y$ with the strong property $P$.
\end{theorem}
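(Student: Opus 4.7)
The plan is to construct $Y$ as a spacing shift $\Omega_R$ with $R = \N \setminus S$, where $S = \{s_1 < s_2 < \cdots\} \subset \N$ is an infinite but very sparse set produced by a diagonal induction. Non-mixing is immediate from the infiniteness of $S$: taking $u = v = 1$, the block $1w1$ with $|w| = \ell - 1$ lies in $\lang(\Omega_R)$ precisely when the cross-distance $\ell$ belongs to $R$, so $N_\sigma([1],[1]) = R$, and since $R$ is not cofinite $\Omega_R$ cannot be mixing.

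The core of the argument is to arrange the placement of the $s_k$ so that the strong property $P$ can be witnessed by gap words of the form $0^n$. Fix $u_1,\ldots,u_k \in \lang_L(\Omega_R)$ and let $\Sigma_i \subset \{0,\ldots,L-1\}$ be the positions of $1$'s in $u_i$. Writing $\ell := L+n$, in the concatenation $u_{\varphi(1)}\, 0^n\, u_{\varphi(2)}\, 0^n \cdots u_{\varphi(N)}$ the $1$'s sit at positions $(j-1)\ell + \Sigma_{\varphi(j)}$. Intra-block differences of these positions already lie in $R$; inter-block differences take the form $m\ell + d$ with $m \geq 1$ and $d$ in the finite set $D := \{s - s' : s \in \Sigma_i,\, s' \in \Sigma_j\} \subset \{-(L-1),\ldots,L-1\}$. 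Hence the strong property $P$ for this family reduces to finding $n$ such that $(\ell\N_+) + D$ is disjoint from $S$.

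To secure such $n$ uniformly over all admissible families, I would enumerate the countably many potential finite collections of $0/1$-blocks of a common length as $\mathcal{F}_1, \mathcal{F}_2, \ldots$ with associated lengths $L_j$ and finite difference sets $D_j \subset \{-(L_j-1),\ldots,L_j-1\}$, and interleave the construction of $S$ with the selection of a gap length $n_j$ for each $\mathcal{F}_j$. At stage $j$, first choose $n_j$ so large that $\ell_j := L_j + n_j$ exceeds every previously chosen $s_i$, which at once forces $(\ell_j \N_+) + D_j$ to miss the part of $S$ already selected. Then add a new element $s_j$ to $S$, selecting it outside the finitely many intervals $[m\ell_i - L_i,\, m\ell_i + L_i]$ for $i \leq j$ and $m \geq 1$ that could spoil previously fixed witnesses, and also outside the finitely many intra-block differences appearing in $\mathcal{F}_1,\ldots,\mathcal{F}_j$ (so that these families remain valid in $\lang(\Omega_R)$). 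Since each progression $\ell_i \N_+$ has density $1/\ell_i$, the excluded set has total density at most $\sum_{i \leq j} (2L_i+1)/\ell_i$, which can be made strictly less than $1$ by having the $n_i$ grow sufficiently fast, leaving infinitely many admissible candidates for $s_j$.

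The main obstacle is the bookkeeping of this diagonal construction: one must check at each stage that valid choices of $n_j$ and $s_j$ exist, that every $n_j$ remains a property $P$ witness under the full $S$ (not just the part present at stage $j$), and that $S$ is ultimately infinite so that $\Omega_R$ fails to be mixing. The crucial quantitative facts making this work are the small density $1/\ell_i$ of each arithmetic progression $\ell_i \N_+$ and the complete freedom to choose the $n_j$ as large as desired, which together absorb all the constraints. Since the strong property $P$ in particular implies weak mixing by \cite{B92}, the resulting $\Omega_R$ is a weakly mixing but not mixing shift space with the strong property $P$, as required.
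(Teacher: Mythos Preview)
Your approach is correct and genuinely different from the paper's. Rather than a diagonal construction, the paper simply takes $R = \N \setminus \{2^k : k \in \N\}$ and verifies directly that for blocks of length $L = 2^k$ the gap word $w = 0^{2L}$ works: a short computation shows that all inter-block distances in $v_0 w v_1 w \cdots w v_t$ lie in $\bigcup_{m \geq 0}\{(3m+2)L+1,\ldots,(3m+4)L-1\}$, and an elementary divisibility check rules out powers of $2$ from this set. The explicit choice is much shorter and yields a concrete computable example; your inductive scheme is more flexible (one could impose further constraints on $S$) at the cost of bookkeeping. Two small slips in your sketch are worth noting: the condition $\ell_j > \max_{i<j} s_i$ is not quite enough to force $\{m\ell_j + d : m \geq 1,\, d \in D_j\}$ to miss $\{s_1,\ldots,s_{j-1}\}$, since the minimum of that set is $\ell_j - (L_j-1)$ rather than $\ell_j$ (just take $n_j$ slightly larger); and the intervals $[m\ell_i - L_i,\, m\ell_i + L_i]$ for $m \geq 1$ are infinitely many, not finitely many, though your density bound handles them correctly. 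The extra requirement that $s_j$ avoid the intra-block differences of $\mathcal{F}_1,\ldots,\mathcal{F}_j$ is harmless but unnecessary, since a family containing a block outside $\lang(\Omega_R)$ needs no witness at all.
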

\begin{proof}
We construct a \emph{spacing shift} with the desired properties.  Below we write $\binary{n}$ for the binary representation of a positive integer $n$, that is,
\[
\binary{1}=1,\, \binary{2}=10,\,\binary{3}=11,\ldots.
\]
Also for $u=u_1\ldots u_n\in\{0,1\}^n$ we let $\Delta(u)=\{|i-j|:1\le j<i\le n,\, u_i=u_j=1\}$.

Let $R=\N\setminus\{2^k:k\in\N\}$. Define $Y=\Omega_R$, and note that $R$ is thick, thus $\Omega_R$ is nontrivial and weakly mixing (see \cite{LZ}). We claim that for every $k\in\N$, $L=2^k$, $w=0^{2L}$ and any family of $R$-allowed blocks $v_0,v_1,\ldots,v_t$ of length $L$,  the block  $u=v_0wv_1wv_2\ldots v_{t-1}wv_t$ is also $R$-allowed. This clearly implies that the spacing shift $\Omega_R$ has the strong property $P$.
A simple calculation yields that
\[
\Delta(u) \subset \bigg(\{1,\ldots,L-1\}\cap R \bigg)\cup\bigg(\bigcup_{m=0}^\infty \{(3m+2)L+1,\ldots,(3m+4)L-1\}\bigg).
\]
It is enough to show that no power of $2$ is in $\Delta(u)$.

Note that any
\[q\in \bigcup_{m=0}^\infty \{(3m+2)L+1,\ldots,(3m+4)L-1\}\]
can be written as $q=a+b$ where $a\in \{2^{k+1}+1,\ldots,2^{k+2}-1\}$ and $b=3m\cdot 2^k$.
If $b=0$ then clearly $q=a$ is not a power of $2$, hence we may assume that $m>0$.
Then $\binary{b}=\binary{3m} 0^k$ and
$\binary{a}=1x_k\ldots x_{0}$,  where not all $x_i$'s are $0$.
Denote
\[
\binary{a+b}=y_ly_{l-1}\ldots y_k\ldots y_0.
\]
Note that $l>k+1$.
%If $a+b=2^{l-1}$ then the last $k+1$ digits of $\binary{a+b}$ must be zeros.
%On the other hand,  if $z=0$ (equivalently, if $3m$ is even), then $y_k\ldots y_0=x_k\ldots x_0$, hence not all $y_i$'s for $i=0,\ldots,k$ are $0$.
If $x_i\neq 0$ for some $i=0,1,\ldots,k-1$, then $y_i\neq 0$ and $a+b$ is not a power of $2$.
If $x_0=x_1=\ldots=x_{k-1}=0$ and $x_k=1$, then $a=3\cdot 2^k$. In that case, $a+b$ is also divisible by $3$ and hence it is not a power of $2$.
Therefore $\Delta(u)\subset R$ and $\Omega_R$ has the strong property $P$.

On the other hand $\Omega_R$ is not topologically mixing because it is easy to see that $R=N([1]_R,[1]_R)$ is not cofinite which is a necessary condition for topological mixing (see \cite{Banks-etal}, cf. \cite{LZ}). Here
$[1]_R=\{x\in\Omega_R:x_0=1\}$ is a nonempty open subset of $\Omega_R$.
\end{proof}

\section{A mixing coded system without a generators of coprime length}

We construct a mixing coded system without periodic points of odd period and such that every set of generators for this system contains only words of even length.

Let $t=t_0t_1t_2\ldots=10010110\ldots$ be the Prouhet-Thue-Morse sequence (see \cite{Allouche}). Recall that it obeys $t_{2n}=t_n$ and $t_{2n+1}=1-t_n$. It is well-known that $t$ is a cube-free sequence, hence neither $000$ nor $111$ occur in $t$. Furthermore, $\lang(t)$ is a language of a minimal and non-periodic shift space $X_{\text{TM}}$.

We first define auxiliary sets $L_n\subset\{0,1\}^*$ for $n=1,2,\ldots$ and a sequence of words $\{a_k\}_{k=0}^\infty$.
We begin by setting $a_0=01$ %$a_0=11100111$, $a_1=1110000111$ and $L_1:=\{a_0,a_1\}$.
and $L_1:=\{a_0\}$.
Assume that we have performed $n-1$ steps of our construction ($n\in\N$). We are given the set $L_{n-1}$ and $\{a_k\}_{k=0}^\infty$ is defined for indices $0,1,\ldots,s_n-1$, that is, $s_n$ denotes the number of words in the sequence $\{a_k\}$ constructed up to the step $n$.  In particular, we have $s_1=0$ and $s_2=1$.
At each step $n\ge 2$ we enumerate the blocks in $L_{n-1}$ starting from $s_n$, that is, we write
\[
L_{n-1}=\{w_{s_n},\dots,w_{s_n+|L_{n-1}|-1}\}. %\text{ and } s_n:=\sum_{k=1}^{n-1} |A_k|=\sum_{k=1}^{n-2} |L_k|+1.
\]
We extend the sequence $\{a_k\}$ by adding words
%\[
%a_{s_{n}},\ldots, a_{s_{n+1}-1},
%\]
%where
\[
a_{j}=01110 t_{[0,4j-3]}011110 w_{j} 011110 t_{[0,4j-1]} 01110.
\]
for $j=s_n,\ldots, s_n+|L_{n-1}|-1$.
Then we set
\begin{align*}
%A_n:=&\bigg\{a_{s_n}, a_{s_n+1},\ldots, a_{s_{n+1}-1}\bigg\},\\
L_n:=& \bigg\{a_{s_n}, a_{s_n+1},\ldots, a_{s_{n+1}-1}\bigg\}\cup\bigcup_{k=1}^n L_n^k,
\end{align*}
where $L_n^k=\{w_1w_2\ldots w_k:w_j\in L_n\text{ for } j=1,\ldots,k\}$. This completes the step $n$ and our induction.
Let $\QQ:=\{a_i \setsep i \in \N_0\}$. %=\bigcup_{n=1}^\infty A_n.

We will call the words $01110$ and $011110$  \emph{markers}. Note that $a_0$ is the only element of $\QQ$ without markers, and since $111$ is never a subblock of $\lang(t)$ we can identify positions of all markers in $a_k$ and therefore we can identify also positions of blocks $t_{[0,4j-3]}$ and $t_{[0,4j-1]}$. Hence knowing that $w\in\QQ$ and the length of the longest subblock from $\lang(t)$ in $w$  between two markers (when $w\neq a_0$) we can uniquely determine $k$ such that $w=a_k$.

Notice that
\begin{equation}
L_n \subset L_{n+1}\quad \text{ and }\quad \QQ^+=\bigcup_{n=1}^{\infty} L_n,
\label{eq:propofG}
\end{equation}
thus $\QQ$ and $\bigcup_{n=1}^\infty L_n$ generate the same coded system denoted by $\X$. % $\X=\overline{\bigcup_{i\in \Z} \sigma^i(Q^\infty)}$.

\begin{lemma}\label{lem:mixingXcp}
The coded system $\X$ is mixing.
\end{lemma}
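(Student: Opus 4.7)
My plan is to show directly that for every $u,v\in\lang(\X)$ there is $N$ such that, for every $n>N$, some word $w\in\lang_n(\X)$ satisfies $uwv\in\lang(\X)$.

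First I would establish that every $u\in\lang(\X)$ is a factor of some $w_j$, and hence of the generator $a_j=01110\,t_{[0,4j-3]}\,011110\,w_j\,011110\,t_{[0,4j-1]}\,01110$. Indeed, $u$ is a factor of some $W\in\QQ^+=\bigcup_n L_n$, and by the construction any $W\in L_{n-1}$ is assigned to be some $w_j$ at stage $n$. Applying this to $u$ and $v$ separately, I obtain decompositions $a_{j_u}=\alpha u\beta$ and $a_{j_v}=\gamma v\delta$ of two (possibly equal) generators. Then for every $Z\in\QQ^+\cup\{\perp\}$, the concatenation $a_{j_u}Z a_{j_v}$ lies in $\QQ^+$, so its factor $u\beta Z\gamma v$ shows that $uwv\in\lang(\X)$ for $w=\beta Z\gamma$ of length $|w|=|\beta|+|\gamma|+|Z|$.

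Because $|a_0|=2$ and all $|a_j|$ are even, $\gcd\{|a_j|:j\ge 0\}=2$, so Remark~\ref{rem:frob} yields every sufficiently large even integer as $|Z|$ for a suitable $Z\in\QQ^+$; together with $Z=\perp$, this produces $w$ of every sufficiently large length $n$ with $n\equiv|\beta|+|\gamma|\pmod 2$. To reach the complementary parity class I would produce a second embedding $a_{j_u'}=\alpha'u\beta'$ with $|\beta'|\not\equiv|\beta|\pmod 2$. The starting positions of the seven slots inside $a_j$ -- namely $0,5,4j+3,4j+9,4j+9+|w_j|,4j+15+|w_j|,8j+15+|w_j|$ -- display both parities (position $0$ is even, the rest odd), and moreover the middle slot $w_j$ can itself be chosen freely among all $\QQ^+$-words, with additional occurrences of $u$ coming from joins such as $\ldots01110\,|\,01110\ldots$ between consecutive $a_i$-factors of $w_j$. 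Exploiting these two mechanisms gives a position of $u$ inside an $a_j$ whose parity differs from the original, and then the Frobenius argument of the previous step fills in every large $n$ of the other parity as well.

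The main obstacle is precisely the parity-switching step. Since every element of $\QQ$ has even length, no concatenation of generators can change the parity of a position relative to the start of the $\QQ^+$-word; the parity change must be extracted from a genuinely different internal embedding of $u$ inside a single generator. Verifying that such an embedding exists for an \emph{arbitrary} $u\in\lang(\X)$ requires a case analysis according to whether $u$ sits in a marker, a Thue-Morse prefix, the recursive block $w_j$, or astride two of these slots. The self-similarity of $w_j$ (which is itself a $\QQ^+$-word) lets the argument either recurse one level down or resolve directly using the even/odd split of slot positions described above. Once this parity flexibility is in place, combining it with the Frobenius step produces $w$ of every sufficiently large length of both parities, which is mixing.
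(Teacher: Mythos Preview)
Your overall strategy is sound, but you have made the parity step harder than necessary and left it incomplete. The paper's simplification is to avoid embedding an arbitrary $u\in\lang(\X)$ at an unknown position inside some $w_j$: instead, first enlarge $u$ to a concatenation $U=u_1\cdots u_k\in\QQ^+$ containing it (and likewise $v$ to $V\in\QQ^+$). Then $U$ is not merely a factor of some $w_q$; by construction $U\in L_{n+1}$ for some $n$, so $U$ \emph{equals} $w_q$ for a suitable $q$, and the suffix of $a_q$ following $U$ is the explicit word $011110\,t_{[0,4q-1]}\,01110$ of odd length $4q+11$. Both parities now fall out with no search for a second embedding: for even $m$ use $U\,a_0^{m/2}\,V\in\QQ^+$, and for odd $m>4q+11$ use $a_q\,a_0^{(m-4q-11)/2}\,V\in\QQ^+$, in which the gap between $U$ and $V$ is exactly $(4q+11)+(m-4q-11)=m$.

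Your route can also be completed, and without the case analysis you anticipate. The key is already contained in your slot computation: the middle slot $w_j$ begins at the \emph{odd} position $4j+9$. Hence if $u$ sits at position $p$ inside $a_{j_u}$, choose $j'$ with $w_{j'}=a_{j_u}$; then $u$ sits at position $4j'+9+p$ inside $a_{j'}$, of opposite parity, and since $|a_{j'}|$ is even the suffix length $|\beta'|$ flips parity as well. One extra level of embedding suffices---no dissection into ``marker / Thue--Morse prefix / recursive block'' cases is required. As written, however, your proposal flags this as ``the main obstacle'' and only gestures at a resolution, so the argument is not yet a proof.
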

\begin{proof}
Every block $u\in\lang(\X)$ is a subword of some concatenation of generators. Therefore
it is enough to show that for any $k,\ell \in \N$ and $u_1,\dots,u_k, v_1\dots v_\ell \in \QQ$
there is $M \in \N$ such that for all $m >M$ there is a block $w \in \{0,1\}^m$ with
$u_1\dots u_\ell w v_1 \dots v_k \in \lang(\X)$.

Observe that by \eqref{eq:propofG}  there is $n \in \N$ such that
$u_1,\ldots,u_k \in L_{n}$.  Clearly, we may also assume that $n>k$.
Then it follows directly from the construction that $u=u_1\dots u_k\in L_{n+1}$ and there is $q \in \N$
such that
\[
a_{q}=01110 t_{[0,4q-3]} 011110 u 011110 t_{[0,4q-1]} 01110\in L_{n+2}.
\]
Define $s_{2q-1}=t_{[0,4q-3]}$ and $s_{2q}=t_{[0,4q-1]}$.
Set $M=4q+11$. If $m>M$ is odd, then we have
\begin{multline*}
a_q a_0^{\frac{m-4q-11}{2}}v_1\dots v_k=
\\
%a+b+\rlap{$\overbrace{\phantom{c+d+e+f+g}}^x$}c+d
%     +\underbrace{e+f+g+h+i}_y +k+l=e^2
01110 s_{2q-1} 011110 \underbrace{u_1\ldots u_\ell}_{u} \underbrace{011110 s_{2q} 01110 (01)^{\frac{m-4q-11}{2}}}_{w\in\{0,1\}^m}{v_1\dots v_k}%\underbrace{v_1\dots v_k}_{v}
\in \QQ^+.
\end{multline*}

For even $m>M$ we have
\[
ua_0^{\frac{m}{2}}v_1\dots v_k=u(01)^{\frac{m}{2}} v_1\dots v_k \in \QQ^+.
\]
This completes the proof, since $\QQ^+\subset \lang(\X)$.
\end{proof}

\begin{lemma}\label{lem:generators}
  If $X$ is a non trivial coded system generated by a set $Q$ and there is a word $w\in Q$ with odd length, then $X$ contains a
  periodic point of odd prime period greater than one.
\end{lemma}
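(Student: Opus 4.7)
The plan is to realize the required periodic point as a bi-infinite concatenation of $w$ with at most one additional generator, arranged so that the periodic block has odd length.

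Set $n=|w|$; by assumption $n$ is odd. I would first treat the case in which $w$ is not a power of a single letter, that is $w\neq a^n$ for every $a\in\alf$. Then $w^\infty$ already lies in $X=\overline{\QQ^\Z}$ (because $\ldots www\ldots$ is a bi-infinite concatenation of generators), and its least period $q$ divides $n$, so $q$ is odd; moreover $q>1$ because a purely periodic sequence has least period one only when it is constant. In this case $w^\infty$ itself is the required periodic point of odd prime (i.e.\ least) period greater than one.

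In the remaining case $w=c^n$ for a single letter $c\in\alf$, I would use the non-triviality of $X$: if every element of $\QQ$ were a power of $c$, then $\QQ^\Z$ would consist of the single point $c^\infty$ and $X$ would be trivial, contradicting our hypothesis. Hence I can pick $u\in\QQ$ containing some letter different from $c$. Since $n$ is odd, the parity of $jn$ coincides with the parity of $j$, so I can choose $j\in\{1,2\}$ so that $jn+|u|$ is odd. The point $x=(w^j u)^\infty=(c^{jn}u)^\infty$ lies in $X$, its defining block $c^{jn}u$ has odd length $jn+|u|$, and it is not constant because $u$ contributes a non-$c$ symbol. Therefore the least period of $x$ divides $jn+|u|$ (so it is odd) and is strictly greater than one, as required.

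No significant obstacle is anticipated: the argument is a short case-split followed by parity and divisibility bookkeeping, the only substantive point being the observation that a generator set consisting entirely of powers of a single letter cannot generate a non-trivial shift space.
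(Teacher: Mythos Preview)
Your argument is correct and follows the same underlying idea as the paper: build a non-constant word of odd length in $Q^+$ and take its bi-infinite repetition. The only difference is that the paper avoids your case split by first choosing any non-constant $u\in Q^+$ (which exists by non-triviality) and then observing that $uuw$ is automatically non-constant of odd length, so a single construction handles all cases.
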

\begin{proof}
Since $X$ is nontrivial, there is a word in $\lang(X)$ containing symbol $0$ and a word containing symbol $1$. Hence there is a non constant word $u$ in $Q^+$, thus $uu\in Q^*$ is a non-constant word of even length. Then the word $uuw$ is a non constant word of odd length $k$ in $Q^+$ whose infinite concatenation is a non-constant periodic point with an odd prime period dividing $k$.
\end{proof}

We are going to prove that $\X$ has no periodic points with odd period. %the only periodic points of odd period for $\X$ are the fixed points.
We first show that %such
an odd periodic point cannot occur in one of the sofic shifts $Y_n$ generated by $L_n$.
\begin{lemma}\label{lem:primep1}
  For every $n \in \N$ the sofic shift $Y_n$ generated by the set $L_n$ does not contain a periodic point with odd prime period.
  %besides ${1}^\infty$.
\end{lemma}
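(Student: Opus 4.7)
My plan is a two-step argument: first, show by induction on $n$ that every word in $L_n$ has even length; second, use the rigidity of the marker structure to rule out periodic points of odd prime period in $Y_n$.

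For the length induction, the base case $L_1=\{a_0\}$ gives $|a_0|=2$. For the inductive step, each newly introduced word
\[
 a_j=01110\, t_{[0,4j-3]}\, 011110\, w_j\, 011110\, t_{[0,4j-1]}\, 01110
\]
has length $|a_j|=5+(4j-2)+6+|w_j|+6+4j+5=8j+20+|w_j|$, which is even because $|w_j|\in L_{n-1}$ is even by hypothesis; every other element of $L_n$ is a concatenation of such words and so also has even length.

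Now suppose toward a contradiction that $x\in Y_n$ is periodic with odd prime period $p$. Because $L_n$ is finite and each of its elements is a concatenation of the generators $a_0,\ldots,a_{s_{n+1}-1}$, the sofic shift $Y_n$ is exactly the set of bi-infinite concatenations of these generators. Fix any such decomposition of $x$, with generator boundaries $\ldots<c_{-1}<c_0<c_1<\ldots$. Each gap $c_{i+1}-c_i=|a_{j_i}|$ is even by the first step, so all $c_i$ share one parity. Since $\sigma^p x=x$, shifting the decomposition by $-p$ produces another valid decomposition whose boundaries $\{c_i-p\}$ have the opposite parity (as $p$ is odd). If the generator decomposition of $x$ is uniquely determined by $x$, these two sets of boundaries must coincide, which is impossible by the parity mismatch.

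Uniqueness will come from the rigidity of the markers $01110$ and $011110$. The Thue–Morse sequence $t$ is cube-free, so $111$ does not occur inside any Thue–Morse sub-block of an $a_j$; and because every generator starts with $0$, the pattern $111$ never spans a boundary between two consecutive generators. Consequently $01110$ appears in $x$ precisely at the start- or end-marker positions of generators $a_j$ with $j\ge 1$, and together with the explicit internal skeleton of each such $a_j$ and the length formula $|a_j|=8j+20+|w_j|$, these positions pin down the generator boundaries unambiguously. The only exceptional point is $x=(a_0)^\infty=(01)^\infty$, which contains no $01110$: its decomposition must use $a_0$ exclusively, its boundaries form a single even-parity arithmetic progression, and its prime period is $2$. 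The main obstacle in the proof is carrying out this uniqueness check rigorously — in particular, verifying that long runs of $a_0$ interleaved between generators of the form $a_j$ with $j\ge 1$ still admit only one parsing into generators. Once uniqueness is in place, the shift-by-$p$ argument closes the proof immediately.
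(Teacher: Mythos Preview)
Your even-length induction is fine, and the shift-by-$p$ parity idea would indeed finish the proof \emph{if} you had uniqueness of the generator decomposition. But the uniqueness argument, which you yourself flag as the ``main obstacle'', is not only incomplete---one of the intermediate claims you rely on is false. You assert that ``$01110$ appears in $x$ precisely at the start- or end-marker positions of generators $a_j$ with $j\ge 1$''. This overlooks the nesting in the construction: the middle block $w_j$ of $a_j$ lies in $L_{n-1}\subset\QQ^+$, so it is itself a concatenation of earlier $a_i$'s and therefore contains its own markers $01110$ (and $011110$). Hence many occurrences of $01110$ in $x$ sit \emph{strictly inside} a single top-level generator, not at its boundary. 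Locating all copies of $01110$ thus does not pin down the top-level parsing; you would need to disentangle the nested hierarchy of markers, which is exactly the delicate part you have not carried out.

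The paper sidesteps full uniqueness with a much lighter anchoring device. Instead of recovering every boundary, it looks at the \emph{longest} Thue--Morse block between markers occurring in $x$; its length is $4k$ for some $k$, and this forces the corresponding stretch of $x$ to be an exact copy of $a_k$. Crucially, in any $\QQ^+$-decomposition of a word covering this stretch, this copy of $a_k$ must coincide with one of the factors (no $a_r$ with $r>k$ is available, and smaller generators cannot produce a TM block of length $4k$ between markers). Periodicity then places another aligned copy of $a_k$ exactly $q$ symbols later, so $x_{[j,j+q-1]}\in\QQ^+$ and $q$ is even. The moral: you do not need global uniqueness of parsing, only that the \emph{maximal} generator present is anchored---and the graded TM-block lengths were built into the construction precisely to provide that anchor.
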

\begin{proof}
Fix any $n\in \N$ and let be a point $x\in Y_n$ with
prime period $q$. Clearly, $q>1$ because the lengths of runs of $0$'s and $1$'s in $\lang(\X)$ are bounded.
If $x$ does not contain any marker then $x=(01)^\infty$ and $q$ is even.
Thus we may assume that there are (infinitely many) markers in $x$.
Let $\ell$ be the length of the longest block $w$ from $\lang(t)$ appearing in $x$ between two markers.
Then $\ell=4k$ for some $k$, and there must be $j \in \Z$ such that $x_{[j,j+|a_k|-1]}=a_k$  (no word $a_r$ with $r>k$ can appear in $x$, since then we would have $\ell\geq 4k+4$). Because $x$ has period $q$, we also have $x_{[j+q,j+q+|a_k|-1]}=a_k$. Let $w\in \QQ^+$ be a word which contains $x_{[j,j+q+|a_k|-1]}$ as a subblock and which does not contain
a block from $\lang(t)$ of length greater then $4k$. Write $w=v_1\ldots v_t$ where $v_j\in \QQ$. By the above observation, we have that $x_{[j,j+|a_k|-1]}=x_{[j+q,j+q+|a_k|-1]}=a_k\in \QQ$ must be among $v_i$.
This immediately implies that $x_{[j,j+q-1]} \in \QQ^+$ and since all words in $\QQ$ have even length, we find that
$q=|x_{[j,j+q-1]}|$ is even.
\end{proof}

Finally we show that taking the closure of $\bigcup_{n=1}^\infty Y_n$ does not introduce periodic points.
\begin{lemma}\label{lem:primep2}
  Any element $x$ of $\X \setminus \bigcup_{n=1}^\infty Y_n$ contains arbitrary long blocks from $\lang(t)$.
  In particular, $x$ cannot be periodic.
\end{lemma}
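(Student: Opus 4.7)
I will prove the contrapositive: if $x\in\X$ has all $\lang(t)$-subblocks of length at most $M$, I will show $x\in Y_n$ for a suitable $n$, contradicting $x\notin\bigcup_n Y_n$. The first step is to choose $n$ so that (a)~$4s_{n+1}-2>M$, and (b)~every length-$\ell'$ Thue--Morse factor $F$ with $\ell'\le M$ which occurs at a position $\equiv r\pmod 4$ in $t$ also occurs at some position strictly less than $4s_{n+1}$ at the same residue $r$. Condition~(b) is a finite requirement: iterating $t_{2n}=t_n$, $t_{2n+1}=1-t_n$ one gets $t_{4n}=t_{4n+3}=t_n$ and $t_{4n+1}=t_{4n+2}=1-t_n$, so each residue-$r$ subsequence of $t$ equals $t$ or $\bar t$, and an occurrence of $F$ at residue $r$ corresponds to $F$ being a factor of one of the finitely many minimal components of $(X_{\mathrm{TM}},\sigma^4)$, forcing syndetic recurrence at that residue.

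Given any $u\in\lang(x)$, I will write $u$ as a subblock of some $v=a_{j_1}\cdots a_{j_m}\in\QQ^+$ and partition the generators of $v$ whose support in $v$ intersects that of $u$ into interior generators (strictly inside the support of $u$) and the at most two boundary generators $a_{j_L}$ and $a_{j_R}$, contributing a visible suffix $b_L$ and prefix $b_R$ respectively. An interior generator with index $\ge s_{n+1}$ would embed a Thue--Morse block of length $>M$ into $u\subseteq x$, contradicting the hypothesis; hence every interior generator has index less than $s_{n+1}$, so the middle of $u$ lies in $L_n^+$. The same $M$-bound forces $|b_L|,|b_R|\le M+5$.

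The right boundary is easy: $b_R=01110\cdot t_{[0,|b_R|-6]}$ is the common prefix of every $a_i$ with $4i\ge|b_R|-3$, hence of some $a_i$ with $i<s_{n+1}$ by~(a). For the left boundary, $b_L=t_{[4j_L-\ell',4j_L-1]}\cdot 01110$ with $\ell'\le M$ must be realised as the suffix of some $a_i$ with $i<s_{n+1}$, which reduces to finding such $i$ with $t_{[4i-\ell',4i-1]}=t_{[4j_L-\ell',4j_L-1]}$. The factor $F=t_{[4j_L-\ell',4j_L-1]}$ occurs at position $4j_L-\ell'\equiv-\ell'\pmod 4$, and condition~(b) supplies a small occurrence of $F$ at that residue, furnishing $i$. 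Assembling a word in $L_n^+$ with this $a_i$ on the left, the middle unchanged, and a suitable $a_{i'}$ absorbing $b_R$ on the right exhibits $u$ as a subblock of an $L_n^+$-word, so $u\in\lang(Y_n)$; arbitrariness of $u$ gives $x\in Y_n$, contradiction.

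The ``in particular'' is then routine: if $x=w^\infty$ were periodic, pigeonholing on the starting offsets modulo $|w|$ of arbitrarily long $\lang(t)$-subblocks of $x$ would force some $\sigma^r x$ to have arbitrarily long $\lang(t)$-prefixes, placing $\sigma^r x\in X_{\mathrm{TM}}$, yet $\sigma^r x$ is still periodic while $X_{\mathrm{TM}}$ has no periodic points. The main obstacle will be verifying condition~(b)---the Thue--Morse recurrence at fixed residues modulo $4$---together with the slightly more delicate sub-case in which $u$ lies entirely inside a single generator $a_j$ with $j\ge s_{n+1}$, where the absorption must be applied recursively to the inner word $w_j$, which itself is a concatenation possibly involving generators of index $\ge s_{n+1}$.
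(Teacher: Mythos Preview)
Your plan differs substantially from the paper's, and the ``obstacle'' you flag at the end is not a detail but the heart of the proof.

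The paper's argument is a short descent that sidesteps your boundary-replacement machinery entirely. The key device is to restrict attention to subwords $w_i$ of $x$ that begin and end with the short marker $01110$. For such a $w_i$ one takes the minimal $j$ with $w_i\in\lang(Y_j)$ and a witness $g=bw_ic\in L_j^+$. If $j>m$ (with $m$ fixed by the Thue--Morse bound on $x$), then $g$ must involve some $a_\ell$ with $\ell>k$; since $w_i$ starts and ends with $01110$ (containing $111$, which never occurs inside a Thue--Morse block) and the two Thue--Morse flanks of $a_\ell$ are too long to sit inside $w_i$, those flanks lie entirely in $b$ and $c$, so $w_i$ lands inside the middle word $w_\ell\in L_{j-1}$, contradicting minimality of $j$. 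No suffix-matching in $t$, no residue-$4$ recurrence, is needed.

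Your recursion into $w_j$ is precisely this descent, but you have not set it up to terminate. As you note, $w_j$ may again involve generators of index $\ge s_{n+1}$, so one needs a well-founded induction on the construction level $n'$ at which $a_j$ was introduced, together with an argument that the level eventually drops to $\le n$. Moreover, at each step the block of interest shrinks from $u$ to $u\cap w_j$, and the leftover slivers of $u$ inside the Thue--Morse flanks and the $011110$ markers of $a_j$ must be reattached; this is a different boundary problem from your $b_L,b_R$ replacement (the cut now lies at an internal $011110$ marker, not at a generator boundary), and you do not address it. Once one aligns subwords to the $01110$ markers as the paper does, both boundaries become trivial and your condition~(b) and the left-suffix matching become superfluous.
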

\begin{proof}
Assume on the contrary that there is $x\in \X \setminus \bigcup_{n=1}^\infty Y_n$ such that the longest block from $\lang(t)$ appearing in $x$ has length at most $4k$ for some integer $k>0$. 
%Since $X_{\text{TM}}$ is minimal, there exists $m\in\N$ such that every $v\in \lang(t)$ with $|v|\leq 4k$ is a subblock of every word $w\in \lang_m(X_{\text{TM}})$. 
Then $x$ must contain infinitely many markers as subwords, as otherwise $x=\ldots a_0a_0 w a_0 a_0 \ldots$ for some $w\in L_n$ and some $n$, thus $x\in Y_n$.% which is impossible. (za dlugie zdanie z tym!)

There exists an infinite set $J\subset \Z$
and a strictly increasing infinite sequence of integers $(n_i)_{i \in J}$ such
that $x_{[n_i,n_i+4]}=01110$ if and only if there is $i\in J$ such that $j=n_i$.
    Let $m$ be the least integer such that $a_{k} \in L_m$.

 We claim that every word $w_i:=x_{[n_{-i},\dots,n_i+4]}$ is contained in $\lang(Y_m)$.
  Let $j$ be the smallest positive integer, such that $w_i \in \lang(Y_j)$.
  There must be some word $g \in L_j^+$ such that $g=bw_ic$ with $b,c \in \lang(\X)$.
  Either $j \leq m$, in which case we are done, or $j>m$ and $g$ must contain $a_\ell$ for some $\ell >k$.
  Since $w_i$ starts and ends with $01110$, the two longest blocks of symbols from $\lang(t)$ occurring  in $a_\ell$ must
  be contained in $b$ and $c$ and thus $w_i$ is already contained in the middle word
  of $a_\ell$, which is an element of $L_{j-1}$. This contradicts the minimality of $j$.
Therefore our claim holds.
If $J$ is bi-infinite, then $x \in Y_m$ which is a contradiction.
Otherwise, either
\begin{eqnarray*}
a&=&\inf\{n_i : i\in J\}=\min\{n_i : i\in J\}>-\infty, \quad \text{or}\\
b&=&\sup\{n_i : i\in J\}=\max\{n_i : i\in J\}<\infty.
\end{eqnarray*}
If $a >-\infty$, then $x_{(-\infty, a-1]}$ does not contain markers and $x_{[a, a+4]}=011110$ which implies that $x_{(-\infty, a-1]}=\ldots a_0 a_0 a_0$.
In the second case $b<\infty$ and we obtain that $x_{[b+1,\infty)}=a_0 a_0 a_0\ldots $,
hence both cases imply that $x \in Y_m$ completing the proof.
\end{proof}

 \begin{theorem}
 There exists a coded system $X$ which is mixing, but does not have periodic points with odd periods. In particular, every set of generators for $X$ contains only blocks of even length.
 \end{theorem}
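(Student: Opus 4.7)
My plan is to combine the four lemmas of this section. Lemma \ref{lem:mixingXcp} already gives mixing of the coded system $\X$, so the remaining work is to verify that $\X$ has no periodic point of odd period and then to deduce the statement about generators via the contrapositive of Lemma \ref{lem:generators}.

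First I would show that every periodic point of $\X$ has even prime period. Let $x\in\X$ be periodic with prime period $q\geq 1$. If $x$ lay in $\X\setminus\bigcup_{m\geq 1}Y_m$, then by Lemma \ref{lem:primep2} it would contain arbitrarily long subwords from $\lang(t)$ and hence could not be periodic, so $x$ must belong to some $Y_m$. I would rule out $q=1$ by noting that $\X$ contains no constant points: the runs of $0$'s and $1$'s occurring in $\lang(\X)$ are bounded, since no generator in $\QQ$ allows arbitrarily long runs. Thus $q\geq 2$, and Lemma \ref{lem:primep1} applied in $Y_m$ forces $q$ to be even. Any period of $x$ is a multiple of $q$, hence even.

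Next, for the statement about generators, I would observe that $\X$ is non-trivial — for instance it contains $(01)^\infty$, so $\lang(\X)$ uses both letters — and then invoke the contrapositive of Lemma \ref{lem:generators}: if some generating set $Q$ for $\X$ contained a word of odd length, $\X$ would have a periodic point of odd prime period greater than one, contradicting the previous paragraph. Hence every set of generators of $\X$ consists of words of even length only.

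The hard part is already behind us: the combinatorial rigidity built into the construction of $\QQ$ via the two marker words $01110$ and $011110$ together with the Prouhet-Thue-Morse fillings $t_{[0,4j-3]}$ and $t_{[0,4j-1]}$ is what powers Lemmas \ref{lem:primep1} and \ref{lem:primep2}; once they are in hand, the theorem is a direct synthesis, with no additional estimates required.
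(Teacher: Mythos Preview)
Your proposal is correct and follows essentially the same route as the paper: mixing comes from Lemma~\ref{lem:mixingXcp}, the absence of odd-period points from combining Lemmas~\ref{lem:primep1} and~\ref{lem:primep2}, and the generator statement from the contrapositive of Lemma~\ref{lem:generators}. The only difference is that you spell out a few details (ruling out $q=1$, checking non-triviality, passing from prime period to arbitrary period) that the paper leaves implicit or absorbs into the proof of Lemma~\ref{lem:primep1}.
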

 \begin{proof}
 The shift $X=\X$ is mixing by Lemma~\ref{lem:mixingXcp}. Combining Lemma~\ref{lem:primep1} and \ref{lem:primep2} we see that $\X$ does not contain a periodic point with odd prime period. Then it follows from Theorem~\ref{lem:generators} that every set of generators for $X$ contains only blocks of even length.
 \end{proof}

%Remark: We can slightly modify this construction to obtain a mixing coded systems without periodic points of odd period (even fixed points).

\section{Strong property $P$ and mixing coded systems}

As we have seen before, the strong property $P$ does not imply topological mixing.
Clearly, the converse implication is neither true, since Blanchard \cite{B92} proved that the property $P$ implies positive topological entropy and there are examples of mixing shifts with zero topological entropy.

The purpose of this section is to show that every mixing coded system has strong property $P$.

%For the reminder of this section,

\begin{lemma}\label{lem:mod}Let $X$ be a mixing coded system, $Q$ be its generator, and $k=\gcd \{|u| : u\in Q\}$.
Then for each $u \in \lang(X)$ with $|u|= 0 \bmod{k}$ there is a word $v \in Q^+$ such that
$v=aub$ with $|a|= 0 \bmod{k}$ and $|b|= 0 \bmod{k}$.
\end{lemma}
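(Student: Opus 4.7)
The plan is to use iterated topological mixing to produce a word $W\in\lang(X)$ that contains $k$ copies of $u$ at carefully prescribed distances, then embed $W$ in some $v\in Q^+$ and exploit the fact that the $k$ occurrences of $u$ must realize all $k$ possible offsets modulo $k$; at least one of them then lies at offset $0$, which is precisely the condition $|a|\equiv 0\pmod k$.

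First I would recall that because $Q$ generates $X$, the language $\lang(X)$ coincides with the set of subwords of words in $Q^+$, and that every word in $Q^+$ has length divisible by $k$. Next I would build, by $k-1$ successive applications of mixing,
\[
W \;=\; u\,c_1\,u\,c_2\,u\cdots c_{k-1}\,u \;\in\;\lang(X),
\]
with $|uc_i|=n_i$. Mixing of $u$ with $u$ provides $c_1$ for any sufficiently large $n_1$; mixing of $uc_1u$ with $u$ provides $c_2$ for any sufficiently large $n_2$; and so on. At step $j$ the only constraint on $n_j$ is that it exceed the mixing threshold $N(W_{j-1},u)$ determined by the word $W_{j-1}=uc_1u\cdots c_{j-1}u$ already built, so I may in addition require $n_j\equiv 1\pmod k$.

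Now $W\in\lang(X)$ occurs as a subword of some $v\in Q^+$; write $v=AWB$ and set $s=|A|\bmod k$. The $j$-th copy of $u$ inside $v$ starts at position $|A|+1+\sum_{i<j}n_i$, so the prefix of $v$ ending just before this copy has length $|A|+\sum_{i<j}n_i\equiv s+(j-1)\pmod k$, using $n_i\equiv 1\pmod k$. As $j$ runs through $1,2,\ldots,k$ the values $s+(j-1)\pmod k$ exhaust $\Z/k\Z$, so some index $j^\ast$ gives offset $0$. Splitting $v=aub$ at the $j^\ast$-th copy of $u$ yields $|a|=|A|+\sum_{i<j^\ast}n_i\equiv 0\pmod k$; since $|v|\equiv 0\pmod k$ and $|u|\equiv 0\pmod k$ it follows that $|b|\equiv 0\pmod k$ as well.

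The main subtlety is in the iterated mixing step, because the threshold $N(W_{j-1},u)$ grows with the length of $W_{j-1}$ and depends on the previously chosen $c_1,\ldots,c_{j-1}$. I expect no real difficulty here: the argument proceeds inductively on $j$, and at each stage I only need to find an integer $n_j$ that is simultaneously large (larger than the current threshold) and congruent to $1$ modulo $k$, which is always possible because every nonempty residue class modulo $k$ contains arbitrarily large integers. With this handled, the counting step that $\{s+(j-1)\bmod k\}_{j=1}^{k}=\Z/k\Z$ finishes the proof with no further input.
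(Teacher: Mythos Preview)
Your proof is correct and follows essentially the same approach as the paper's: both arguments use iterated mixing to line up $k$ copies of $u$ separated by gaps of length $\equiv 1\pmod{k}$, embed the resulting word in some $v\in Q^{+}$, and then pick the copy of $u$ whose offset in $v$ is $\equiv 0\pmod{k}$. Your treatment of the iterated mixing step is in fact more explicit than the paper's, which simply asserts the existence of the required configuration in one sentence.
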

\begin{proof}
  Since $X$ is mixing, there is a word $v\in Q^+$ such that for some $\tilde{a},\tilde{b},z_1,\ldots,z_k\in\lang(X)$ we have
  \begin{align*}
    v=\tilde{a}u z_1 u z_2 u z_3\dots u z_k \tilde{b}
  \end{align*}
  with $|z_i| = 1 \bmod{k}$ for each $i=1,\ldots,k$.
Replacing $v$ by $v^k$ if necessary, we may assume that $|v|=0 \bmod{k}$.
There is $\ell \in \{0,\dots,k-1\}$ such that $|\tilde{a}| = -\ell \bmod{k}$.
Let $a=\tilde{a}u z_1u z_2 \dots uz_\ell$
and $b=z_{\ell+1} u z_{\ell+2}\dots uz_{k}  \tilde{b}$ and observe that $|a| = 0 \bmod{k}$, $|b|=|v| -|a|-|u| = 0 \bmod{k}$
  and $v = aub$.
\end{proof}
%Together with Remark~\ref{rem:knmodk} this finally gives us the following theorem.

\begin{theorem}
If $X$ is a mixing coded system, then $X$ has the strong property $P$.
\end{theorem}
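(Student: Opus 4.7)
The plan is to realize each $u_i$ as a central subword of some element of $Q^+$ with a prefix of uniform length $a$ and a suffix of uniform length $b$ (independent of $i$), and then concatenate such embeddings to obtain the gappy words required by the strong property $P$ as subwords of $Q^+ \subseteq \lang(X)$. Fix a set of generators $Q$ for $X$ and set $d := \gcd\{|q| : q \in Q\}$. Let $u_1, \ldots, u_k \in \lang(X)$ have common length $L$.

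First, I reduce to the case $L \equiv 0 \pmod{d}$. If $L$ is not divisible by $d$, choose $x > 0$ with $L + x \equiv 0 \pmod{d}$. Since $u_i \in \lang(X)$, it occurs in some point of $X$ and therefore can be extended on the right to a word $u_i x_i \in \lang(X)$ with $|x_i| = x$. Proving the theorem for the extensions $u_i' := u_i x_i$ yields a uniform gap length $n'$; then, for any $\varphi$, absorbing each $x_{\varphi(j)}$ into the gap $w_j' \in \lang_{n'}(X)$ and discarding the trailing $x_{\varphi(N)}$ recovers the required concatenation for the $u_i$ themselves with uniform gap length $n = x + n'$.

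Under the assumption $L \equiv 0 \pmod{d}$, Lemma~\ref{lem:mod} provides, for each $i$, a representation $v_i = a_i u_i b_i \in Q^+$ with $|a_i|, |b_i| \equiv 0 \pmod{d}$. By Remark~\ref{rem:frob}, every sufficiently large multiple of $d$ equals the length of some element of $Q^+$. So for $a, b \in d\N$ large enough there exist $\alpha_i, \beta_i \in Q^+$ with $|\alpha_i| = a - |a_i|$ and $|\beta_i| = b - |b_i|$ for every $i$. Then $V_i := \alpha_i v_i \beta_i \in Q^+$ contains $u_i$ with exactly $a$ symbols on its left and $b$ on its right. Setting $n := a + b$, for any $\varphi \colon \{1, \ldots, N\} \to \{1, \ldots, k\}$ the word $V_{\varphi(1)} V_{\varphi(2)} \cdots V_{\varphi(N)} \in Q^+ \subseteq \lang(X)$ has $u_{\varphi(1)} w_1 u_{\varphi(2)} \cdots w_{N-1} u_{\varphi(N)}$ as a subword, where each gap $w_j := b_{\varphi(j)} \beta_{\varphi(j)} \alpha_{\varphi(j+1)} a_{\varphi(j+1)}$ has length exactly $a + b = n$, verifying the strong property $P$.

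The main obstacle is the uniformization step — ensuring the prefix and suffix lengths in $V_i$ can be chosen independently of $i$. This is exactly what forces the preliminary reduction to $L \equiv 0 \pmod{d}$, since Lemma~\ref{lem:mod} requires $|u| \equiv 0 \pmod{d}$ and otherwise the residues of $|a_i|, |b_i|$ modulo $d$ could differ across $i$, preventing any common $a, b$ from working simultaneously. With the residues forced to $0 \pmod{d}$, the Frobenius-type statement of Remark~\ref{rem:frob} completes the argument.
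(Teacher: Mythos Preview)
Your proof is correct and follows essentially the same route as the paper: both embed each $u_i$ as the central block of some word in $Q^+$ via Lemma~\ref{lem:mod}, then pad on both sides by elements of $Q^+$ (using Remark~\ref{rem:frob}) to make the prefix and suffix lengths uniform, so that free concatenation inside $Q^+$ yields the required gappy words. The only difference is cosmetic: the paper restricts to lengths $p\equiv 0\pmod d$ and then invokes Blanchard's criterion \cite[Proposition~4]{B92} to conclude, whereas you verify the strong property~$P$ directly and handle arbitrary $L$ by the preliminary right-extension reduction.
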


\begin{proof}
Blanchard \cite[Proposition 4]{B92} proved that a shift space $X$ over $\alf$ has the property $P$
if for any integer $p$ belonging to some infinite strictly increasing sequence of integers
there exists an integer $q=q(p)$ such for any $k\ge 2$
and any words $u_1,\ldots, u_k\in \lang_p(X)$  there are
 words $w_1,\ldots, w_{k-1}\in \lang_q(X)$ such that $u_{1}w_1u_2\ldots u_{k-1}w_{k-1}u_{k}\in \lang(X)$.
Let $Q$ be a set of generators of $X$ and $k=\gcd \{|u| : u\in Q\}$.

We will show that Blanchard's criterion \cite[Proposition 4]{B92} applies to any $p\in\{\ell\in\N:\ell= 0\bmod{k}\}$.
To this end, fix $p=0 \bmod{k}$ and enumerate all blocks of length $p$ by $v_1, \ldots,v_n$.
We use Lemma \ref{lem:mod} to obtain $a_1,\dots,a_n,b_1,\dots,b_n \in \lang(X)$
such that \[0=|a_1|=\dots=|a_n|=|b_1|=\dots=|b_n| \bmod{k}\]
and $a_iv_i b_i \in Q^+$ for $i \in \{1,\ldots,n\}$.
By Remark \ref{rem:frob} there exists $N$ such that for all $n >N$ there is a word $w \in Q^+$ with $|w|=kn$, thus
we can find
$c_1,\dots,c_n\in Q^+$ and $d_1,\dots,d_n \in Q^+$ such that
$q'=|c_1a_1|=\dots=|c_na_n|$ and $q''=|b_1d_1|=\dots=|b_nd_n|$.
Now let $u_1,\dots,u_k$ be any words in $\lang_p(X)$.
Then there is a function $\phi\colon \{1,\dots,k\} \to \{1,\dots,n\}$ with
$u_i=v_{\phi(i)}$.
Therefore
\[
a_{\phi(1)}u_1 (\underbrace{b_{\phi(1)}d_{\phi(1)}c_{\phi(2)}a_{\phi(2)}}_{w_1})u_2 (\underbrace{b_{\phi(2)}d_{\phi(2)}c_{\phi(3)}d_{\phi(3)}}_{w_2})\ldots c_{\phi(n)}a_{\phi(n)}u_n b_{\phi(n)} \in Q^+.
\]
We set $q(p)=q'+q''$ and we obtain that $X$ has the strong property $P$ by \cite[Proposition 4]{B92}.
\end{proof}

\section{Two folklore results}

We finish the paper with two results which are probably folklore, but we were unable to find them in the literature so we attach them for completeness. Combining them  with Corollary \ref{cor:equiv} we obtain that the stronger forms of transitivity mentioned in the Introduction are equivalent for synchronized systems.

\begin{lemma}\label{lem:rel_prime}
Let $X$ be a coded system presented by a labelled graph $G$.
If there are two cycles on $G$ with relatively prime lengths, then $X$ is mixing.% and has strong property $P$.
\end{lemma}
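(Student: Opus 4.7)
The plan is to prove the mixing property directly by producing, for every pair $u, v \in \lang(X)$ and every sufficiently large $n$, a word $w$ of length $n$ with $uwv \in \lang(X)$. The starting point is the observation that $\lang(X) = \lang(Y_G)$, since any block appearing in the closure already appears in $Y_G$; thus $u$ and $v$ are labels of finite paths $\alpha$ and $\beta$ on $G$. The strategy is to join the endpoint of $\alpha$ to the starting point of $\beta$ by paths whose lengths can be freely adjusted using the two coprime-length cycles.

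Concretely, let $C_1, C_2$ be the given cycles of lengths $p$ and $q$ with $\gcd(p,q)=1$, based at vertices $v_1$ and $v_2$. Using irreducibility of $G$, I would fix once and for all a path $\rho_1$ from the endpoint of $\alpha$ to $v_1$, a path $\tau_1$ from $v_1$ to $v_2$, a path $\tau_2$ from $v_2$ back to $v_1$, and a path $\rho_2$ from $v_1$ to the initial vertex of $\beta$. Then for every choice of nonnegative integers $i, j, k$, the concatenation
\[
\alpha \cdot \rho_1 \cdot C_1^{\,i} \cdot \tau_1 \cdot C_2^{\,j} \cdot \tau_2 \cdot C_1^{\,k} \cdot \rho_2 \cdot \beta
\]
is a path in $G$, and its label has the form $u w v$ with $|w| = L + (i+k)p + jq$, where $L := |\rho_1|+|\tau_1|+|\tau_2|+|\rho_2|$ is a constant depending only on the initial data.

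Setting $m := i+k$ (so $m$ ranges over all nonnegative integers), the achievable lengths for $w$ are exactly $L + mp + jq$ with $m, j \ge 0$. Since $\gcd(p,q)=1$, Remark~\ref{rem:frob} guarantees that every sufficiently large integer can be written as $mp+jq$ with $m, j \ge 0$. Hence for all $n$ larger than some threshold $N = N(u,v)$, the value $n-L$ admits such a representation, producing the desired $w \in \lang_n(X)$ with $uwv \in \lang(X)$. This establishes mixing.

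I do not expect any serious obstacle: the argument is a direct combinatorial construction, and the only subtlety is the (standard) passage between $\lang(X)$ and $\lang(Y_G)$, together with the application of the Frobenius-type statement recorded in Remark~\ref{rem:frob}. The use of \emph{two} copies of $C_1$ (flanking the $C_2$-insertion) rather than one is only to keep the construction symmetric and ensure that $m$ can be made arbitrarily large independently of $j$; a single copy would suffice but would require a slightly different bookkeeping.
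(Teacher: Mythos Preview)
Your proof is correct and follows essentially the same route as the paper's: both fix connecting paths from the terminal vertex of a path labelled $u$ to a cycle vertex, between the cycle vertices, and on to the initial vertex of a path labelled $v$, then invoke the Frobenius/Chicken McNugget fact (Remark~\ref{rem:frob}) that $\{mp+jq:m,j\ge 0\}$ is cofinite. The only cosmetic difference is your extra detour $\tau_2\cdot C_1^{\,k}$ back through $v_1$, which, as you yourself note, is unnecessary---the paper simply uses one pass through each cycle and connects $v_2$ directly to the start of $\beta$.
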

\begin{proof}
Denote by $\alpha_1$ and $\alpha_2$  two cycles on $G$ with relatively prime lengths, $k_j=|\alpha_j|$, $j=1,2$.  Let $e_j$ be a vertex of some edge belonging to $\alpha_j$ for $j=1,2$. Let $u_j$ be the label of $\alpha_j$ for $j=1,2$ read off traversing $\alpha_j$ from $e_j$. Take any words $w_1, w_2\in \lang(X)$.
Since $X$ is coded, we can find paths $\gamma_1,\gamma_2$ on $G$ labelled, respectively, by
$w_1, w_2$. Let $a_2$ be the initial vertex of $\gamma_2$, and $b_1$ be the terminal vertex of $\gamma_1$. Let $\ell_1$ ($\ell_2$) be the length of
the shortest path $\pi_{1}$ ($\pi_2$) on $G$ from $b_1$ to $e_1$ (from $e_2$ to $a_2$) and $m$ be the length of the
the shortest path $\rho$ on $G$ from $e_1$ to $e_2$. Let $v_1,v_2,z\in\lang(X)$ be labels of $\pi_1,\pi_2,\rho$, respectively. It follows that for each $p,q\in\N$ the path
\[
\pi_{pq}=\gamma_1\pi_1(\alpha_1)^p\rho(\alpha_2)^q\pi_2
\]
is labeled by $w_1v_1(u_1)^pz(u_2)^qv_2w_2$.
Since $k_1$ and $k_2$ are relatively prime, the set
$\{pk_1+qk_2:p,q\in\N\}$ is cofinite. Therefore there is $N>0$ such that if we fix any $t\geq N$ then we can find $p=p(t),q=q(t)\in\N$ so that
$t=\ell_1 + p_{t}k_1 + m + q_{t}k_2 + \ell_2$ and the path $\pi_{p(t)q(t)}$ on $G$ has length $t$. Therefore for each $n\ge N-\ell_1$ there is a word $w$ of length $n$ such that $w_1ww_2\in\lang(X)$ and hence $X$ is mixing. \end{proof}

\begin{lemma}\label{lem:syn_rel_prime}
A synchronized shift $X$ is topologically mixing if and only if there are two closed paths with relatively prime lengths in
its Fisher cover.
\end{lemma}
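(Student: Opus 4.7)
The proof splits into the two directions. The ``if'' direction is essentially immediate: the Fisher cover is a labelled irreducible graph presenting $X$, and every synchronized system is a coded system, so Lemma~\ref{lem:rel_prime} applies and yields that $X$ is mixing as soon as its Fisher cover carries two cycles of coprime lengths. No further work is required here.

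For the ``only if'' direction, I would argue by contrapositive. Assume that every closed path in the Fisher cover has length divisible by some integer $d\geq 2$. Because $X$ is transitive, the Fisher cover is irreducible as a directed graph. A standard fact about irreducible digraphs (which in the finite case is the period, and extends without change to the countable case by the same gcd argument) then gives a partition $V=V_{0}\sqcup V_{1}\sqcup\cdots\sqcup V_{d-1}$ of the vertex set such that every edge from $V_{i}$ goes to $V_{(i+1)\modu{d}}$. Consequently, any path of length $\ell$ that starts in $V_i$ terminates in $V_{(i+\ell)\modu{d}}$.

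Next, fix a synchronizing word $w\in\lang(X)$. Using the defining property of the (minimal right-resolving) Fisher cover, there is a unique state $s\in V$ such that every path in the cover whose label ends with $w$ terminates at $s$; say $s\in V_{j}$. Take any word $u$ with $wuw\in\lang(X)$ and trace the corresponding path in the Fisher cover: after reading the first $w$ we are at $s\in V_{j}$, after reading $u$ we are at some $s'\in V_{(j+|u|)\modu{d}}$, and then reading $w$ from $s'$ must end back at $s\in V_{j}$, so $s'\in V_{(j-|w|)\modu{d}}$. Thus $|u|+|w|\equiv 0\modu{d}$, i.e.\ $|wu|\in d\N$. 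This gives
\[
N_{\sigma}([w],[w])\subseteq d\N,
\]
which is not cofinite because $d\geq 2$, contradicting topological mixing of $X$.

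The main obstacle is not any single calculation but rather invoking the two structural facts correctly: first, that the Fisher cover of a synchronized system genuinely enjoys the intrinsic synchronization property that reading a synchronizing word lands in a single, well-defined state (so that tracking the path associated with $wuw$ is unambiguous); and second, that the cyclic $\Z/d\Z$ decomposition of an irreducible digraph based on the gcd of its cycle lengths is valid in the countable setting. Both are folklore and are exactly the ingredients needed to convert the combinatorial assumption ``all cycles are $0\modu{d}$'' into the dynamical obstruction ``returns to $[w]$ are concentrated in $d\N$''.
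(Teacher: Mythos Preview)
Your argument is correct, but it takes a different route from the paper's. The paper proves the ``only if'' direction directly rather than by contrapositive: given a synchronizing word $w$, the magic-word property of the Fisher cover says every path labelled $w$ terminates at a single vertex $e$; mixing then supplies words $u_1,u_2$ with $|u_2|=|u_1|+1$ and $wu_iw\in\lang(X)$, and the paths labelled $u_1w$ and $u_2w$ are cycles at $e$ of consecutive, hence coprime, lengths. This is shorter and avoids invoking the cyclic $\Z/d\Z$ decomposition of an irreducible digraph. Your contrapositive argument, by contrast, proves the slightly stronger structural statement that $N_\sigma([w],[w])\subseteq d\N$ whenever the graph has period $d$, at the cost of importing the period decomposition.

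One point worth making explicit in your write-up: the negation of ``there exist two cycles of coprime lengths'' is a priori only ``every pair of cycles has a common factor'', not ``some fixed $d\ge 2$ divides every cycle length'' (think of cycle lengths $6,10,15$). The two statements coincide precisely because the graph is irreducible: the cycle lengths through a fixed vertex form an additive subsemigroup whose gcd equals the gcd of all cycle lengths, so if that gcd is $1$ the semigroup contains two consecutive integers. You use irreducibility anyway for the decomposition, so this is not a gap, but the passage from the negated hypothesis to ``period $d\ge 2$'' deserves a sentence.
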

\begin{proof}
The ``if'' part follows from Lemma \ref{lem:rel_prime}. For the ``only if'' part assume that $X$ is topologically mixing. Let $w$ be a synchronizing word for $X$. Then $w$ is a magic word for the Fisher cover $(G,\lab)$ of $X$, that is, there is a vertex $e$ of $G$ such that every path labelled by $w$ ends at $e$. Since $X$ is mixing there is $N\in \N$ and there are words $u_1,u_2$ with $|u_1|=N$, $|u_2|=N+1$ such that $wu_1w,wu_2w\in\lang(X)$. Because each path labelled by $w$ ends at $e$, there are closed paths in $G$ labelled by $u_1w$ and $u_2w$ with relatively prime lengths.
\end{proof}

The following theorem summarizes our results on connections between variants of transitivity for coded systems.
\begin{theorem}
Let $X$ be a non trivial coded system. Then the following conditions
are equivalent:
\begin{enumerate}[(a)]
\item\label{mix} $X$ is topologically mixing;
\item\label{pp} $X$ has the strong property $P$;
\item\label{wm} $X$ is topologically weakly mixing;
\item\label{tt} $X$ is totally transitive.
\end{enumerate}
Additionally, if $X$ is synchronized, then any of the above conditions is equivalent to
\begin{enumerate}[(a)]\setcounter{enumi}{4}
\item\label{rel-p} $X$ has two periodic points with relatively prime primary periods.
\end{enumerate}
Moreover, there exists a coded system $\X$ fulfilling \eqref{mix}--\eqref{tt}, but not \eqref{rel-p}.
\end{theorem}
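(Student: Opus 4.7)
The plan is to assemble the theorem from results already proved in earlier sections, closing the loop among \eqref{mix}--\eqref{tt}, adding the synchronized equivalence via the Fisher cover, and pointing to the Section~5 example for the final assertion.

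For the main equivalence I would argue in a cycle. The implication \eqref{mix}$\Rightarrow$\eqref{pp} is exactly the theorem of Section~6. For \eqref{pp}$\Rightarrow$\eqref{wm} I would invoke Blanchard's result (mentioned in Section~2 following the definition of the strong property $P$) that the strong property $P$ implies weak mixing. The step \eqref{wm}$\Rightarrow$\eqref{tt} is a purely topological fact that does not even use the coded structure: a weakly mixing system is totally transitive, because if $N_\sigma([u],[v])$ contains arbitrarily long intervals of consecutive integers then in particular it meets every residue class modulo $n$, so $N_{\sigma^n}([u],[v])\neq\emptyset$. Finally \eqref{tt}$\Rightarrow$\eqref{mix} is Corollary~\ref{cor:equiv}, which is where the coded hypothesis is essential.

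For the synchronized addendum, I would prove \eqref{mix}$\Leftrightarrow$\eqref{rel-p} using the Fisher cover. The direction \eqref{rel-p}$\Rightarrow$\eqref{mix} does not need synchronization at all: if $X$ contains periodic points $x,y$ of coprime primary periods $p,q$, then by irreducibility of any labelled graph $G$ presenting $X$ there are cycles on $G$ of lengths $p$ and $q$ (obtained by reading off the paths tracing $x$ and $y$, completed to cycles via the irreducibility of $G$ if needed — and in fact for the synchronized case the two periodic points yield closed paths in the Fisher cover with lengths $p$ and $q$), whence Lemma~\ref{lem:rel_prime} applies. The other direction \eqref{mix}$\Rightarrow$\eqref{rel-p} is Lemma~\ref{lem:syn_rel_prime}: mixing yields two closed paths with relatively prime lengths in the Fisher cover, and following these closed paths starting at their base vertex produces two periodic points of $X$ with the corresponding coprime primary periods (up to replacing each period by a suitable divisor, which preserves coprimality in at least one pair among the prime-power parts; this detail is the one small point that needs care, since one needs to descend from ``period'' to ``primary period'').

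The only real obstacle, and it is minor, is that last step: extracting actual periodic points with coprime \emph{primary} periods from closed paths of coprime lengths in the Fisher cover. The cleanest route is to observe that if $u_1w$ and $u_2w$ label closed paths of coprime lengths $m_1,m_2$ (as supplied by Lemma~\ref{lem:syn_rel_prime}), then $(u_1w)^\infty$ and $(u_2w)^\infty$ are periodic points whose primary periods divide $m_1$ and $m_2$ respectively, and divisors of coprime integers are coprime, so \eqref{rel-p} follows.

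Finally, the ``Moreover'' clause is immediate from Section~5: the coded system $\X$ constructed there is mixing (hence satisfies \eqref{mix}--\eqref{tt}) but has no periodic points of odd period, and in particular cannot possess two periodic points of relatively prime primary periods since $1$ is the only odd positive integer coprime to every even integer and constant fixed points are excluded by non-triviality of $\X$ together with the fact that $\X$ admits no period-$1$ point (its generators $a_0=01$ and the $a_j$ all have even length at least two, and concatenations produce no fixed point of $\sigma$). This completes the proof.
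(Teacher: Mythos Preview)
Your cycle \eqref{mix}$\Rightarrow$\eqref{pp}$\Rightarrow$\eqref{wm}$\Rightarrow$\eqref{tt}$\Rightarrow$\eqref{mix} is exactly the assembly the paper intends (the theorem is stated as a summary without its own proof), and your treatment of \eqref{mix}$\Rightarrow$\eqref{rel-p} via Lemma~\ref{lem:syn_rel_prime}, including the passage from coprime cycle lengths to coprime primary periods of $(u_1w)^\infty$ and $(u_2w)^\infty$, is correct. The ``Moreover'' clause via the Section~5 example is also fine.

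The genuine gap is in \eqref{rel-p}$\Rightarrow$\eqref{mix}. Your assertion that periodic points of primary periods $p$ and $q$ yield cycles of lengths $p$ and $q$ in a presenting graph, or in the Fisher cover, is unjustified and in general false. A periodic point of a coded system need not be the label of any bi-infinite path on the graph at all---it may lie only in the closure $X=\overline{Y_G}$---and even when it is such a label, the lifted path through a countably infinite graph need not be periodic, so no cycle of the required length is produced. For the Fisher cover of a synchronized system the situation is no better: the periodic point $\xi^\infty$ lifts to a path in the cover only when some power $\xi^n$ contains a synchronizing word, which you have not arranged (and which can fail, e.g.\ for a fixed point $0^\infty$ in a system whose synchronizing words all contain the symbol~$1$). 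Lemmas~\ref{lem:rel_prime} and~\ref{lem:syn_rel_prime} give the equivalence of mixing with the existence of coprime cycles \emph{in the cover}, not directly with the existence of coprime-period periodic points \emph{in $X$}; bridging that last step requires an additional argument that your proposal does not supply. The paper itself only remarks in the introduction that this direction is ``easy to see'' and does not spell it out, so you are not alone in leaving it implicit---but the justification you wrote down does not work as stated.
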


\section*{Acknowledgements}

The research of P. Oprocha was supported by the Polish Ministry of Science and Higher Education from sources for science in the years 2013–2014, Grant No. IP2012 004272. The research of D.~Kwietniak was supported by the  National Science Centre (NCN) under grant no. DEC-2012/07/E/ST1/00185. %The authors would like to thank Michael Schaudner
The research of J.~Epperlein was partly supported by the German Research Foundation (DFG) through the Cluster of Excellence (EXC 1056), Center for Advancing Electronics Dresden (cfaed).

\end{document}